\documentclass[a4paper]{amsart}
\usepackage{fullpage,amssymb,mathtools,verbatim,hyperref}
\newtheorem{theorem}{Theorem}
\newtheorem{lemma}[theorem]{Lemma}
\newtheorem{proposition}[theorem]{Proposition}

\newtheorem{definition}[theorem]{Definition}
\newtheorem{conjecture}[theorem]{Conjecture}
\theoremstyle{remark}

\numberwithin{theorem}{section}
\numberwithin{equation}{section}
\newcommand{\C}{\mathbb{C}}

\newcommand{\Q}{\mathbb{Q}}
\newcommand{\Z}{\mathbb{Z}}
\newcommand{\N}{\mathbb{N}}
\newcommand{\F}{\mathbb{F}}
\newcommand{\OK}{\mathcal{O}_K}
\newcommand{\GEM}{\mathrm{GEM}}
\newcommand{\pp}{\mathfrak{p}}
\newcommand{\qq}{\mathfrak{q}}
\DeclareMathOperator{\sgn}{sgn}
\DeclareMathOperator{\lcm}{lcm}
\DeclareMathOperator{\Disc}{Disc}
\DeclareMathOperator{\Gal}{Gal}
\begin{document}
\title{A generalisation of the Euclid--Mullin sequences}
\author{Andrew R. Booker}
\email{andrew.booker@bristol.ac.uk}
\author{Omri Simon}
\email{OmriSimon@outlook.com}
\address{School of Mathematics, University of Bristol,
Woodland Road, Bristol, BS8 1UG}
\begin{abstract}
We extend Mullin's prime-generating procedures to produce sequences
of primes lying in given residue classes.  In particular we study
the sequences generated by cyclotomic polynomials $\Phi_m(cx)$ for
suitable $c\in\Z$.  Under the Extended Riemann Hypothesis in general and
unconditionally for some moduli, we show that the analogue of the second
Euclid--Mullin sequence omits infinitely many primes $\equiv1\pmod{m}$.
We further show unconditionally that at least one prime is omitted
for infinitely many $m$. This generalises work of the first author for $m=1$
and the second author for $m=2^k$.
\end{abstract}

\maketitle

\section{Introduction}
In \cite{Mullin}, Mullin constructs two sequences of prime
numbers based on Euclid's proof of infinitude of primes.
For Mullin's first sequence, say $\{p_n\}_{n=1}^\infty$,
define $p_n$ to be the smallest prime dividing $p_1\cdots p_{n-1}+1$
(with the convention that the empty product is $1$, so $p_1=2$).
Mullin's second sequence, $\{P_n\}_{n=1}^\infty$, is defined
analogously, choosing instead the largest prime at each stage.
Mullin then asked whether every prime is contained in each of
these sequences, and if not, whether they are recursive, i.e.\
whether there is an algorithm to determine membership
of a given prime.

Cox and van der Poorten \cite{CvdP} showed that the second sequence omits
all primes less than $47$, apart from the first four terms $2$, $3$, $7$,
and $43$.\footnote{Their paper claimed to show that $47$ is omitted as
well, but contained a numerical error that went unnoticed for decades;
we thank Thorkil Naur for bringing this to our attention. With the terms
of the sequence that were known at the time, the Cox--van der Poorten
method only works to rule out primes below $47$, and it only became
possible to rule $47$ out with the computation of the $12$th term by
Wagstaff \cite{Wagstaff} in 1991.} Their method can be extended to the
primes less than $79$ using more known terms of $\{P_n\}_{n=1}^\infty$
\cite{Guy-Nowakowski,Naur,Wagstaff,Booker-A000946}.  Later, the first
author \cite{Booker} showed that the second sequence omits infinitely
many primes, and Pollack and Trevi\~{n}o \cite{Pollack-Trevino} provided
a proof relying only on elementary number theory.
Mullin's second question remains open, but the first author showed
in \cite[Theorem~2]{Booker} that if $\{P_n\}_{n=1}^\infty$ is not
recursive then it must have Dirichlet density zero in the primes.

As for the first sequence, both of Mullin's questions remain open, and
very little is known. Shanks \cite{Shanks} conjectured that the first
sequence does indeed contain every prime, as otherwise the cumulative
products $p_1\cdots p_n$ would always be invertible modulo the smallest
omitted prime $q$, but would conspire to avoid the residue class
$-1\pmod{q}$ for all but finitely many $n$.  This observation formed
the inspiration for this paper, as we now consider what happens should
we only generate primes in a fixed residue class.

A general framework for studying Euclidean-style proofs of
the infinitude of primes in residue classes was described by Pollack
\cite{Pollack}, generalising earlier work of Murty \cite{Murty} (see
also \cite{Murty-Thain}).  Given a coprime progression $a\bmod m$,
Pollack calls a polynomial $f\in\Z[x]$ an $E'(a,m)$ polynomial if
\begin{itemize}
\item[(i)]$f(n)$ has a prime divisor $p\equiv a\pmod{m}$
for every sufficiently large $n\in\N$;
\item[(ii)]for every $p\equiv a\pmod{m}$, there exists
$n\in\Z$ such that $p\nmid f(n)$.
\end{itemize}
(We will also write $f\in E'(a,m)$ to mean the same.) Pollack proceeded
to show that any $E'(a,m)$ polynomial yields a Euclidean-style
proof of infinitude of primes $p\equiv a\pmod{m}$. From work of
Schur \cite{Schur}, we know that $E'(a,m)$ polynomials exist when
$a^2\equiv1\pmod{m}$, and Pollack proved that the converse holds assuming
Schinzel's Hypothesis H.

Pollack's proof of infinitude of primes $p\equiv a\pmod{m}$ given $f\in
E'(a,m)$ involves an arbitrary choice at each step of the construction,
and is therefore not well suited to our goal of producing explicit,
uniquely-defined sequences.  For that purpose, we make yet another
definition:
\begin{definition}\label{def:GEM}
We say that $f\in\Z[x]$ is a Generalised Euclid--Mullin
polynomial for the progression $a\bmod m$,
and write $f\in\GEM(a,m)$, if
\begin{itemize}
\item[(i)] $f(1)\ne0$ and
\item[(ii)] for every $n\in\N$ satisfying $(n,f(0))=1$ and
$n\equiv a^k\pmod{m}$ for some $k\in\N$, $f(n)$ has a prime divisor
$p$ satisfying $p\nmid f(0)$ and $p\equiv a\pmod{m}$.
\end{itemize}
\end{definition}

Given $f\in\GEM(a,m)$, we can construct a sequence of primes $(p_n)_{n\ge1}$
by choosing $p_n$ to be a prime divisor of $f(p_1\cdots p_{n-1})$
satisfying $p_n\nmid f(0)$ and $p_n\equiv a\pmod{m}$. Since $f(1)\ne0$ and
\[
f(p_1\cdots p_{n-1})\equiv f(0)\pmod{p_1\cdots p_{n-1}},
\]
it is clear that $f(p_1\cdots p_{n-1})$ never vanishes, and that the
$p_n$ are distinct, since $p_n\nmid f(0)\implies p_n\nmid p_1\cdots p_{n-1}$.

We remark that Definition~\ref{def:GEM} is not the most general one that
would enable the construction of infinite sequences of distinct primes
$p\equiv a\pmod{m}$, but it has the added feature that we can prove a
sort of equivalence between $\GEM(a,m)$ and $E'(a,m)$. More precisely,
although they are not the same as sets, they generate the same proofs,
in the sense that an $E'$ polynomial can be converted into a $\GEM$
polynomial, and vice versa:
\begin{proposition}
For any coprime progression $a\bmod m$,
\[
E'(a,m)\ne\emptyset\iff\GEM(a,m)\ne\emptyset.
\]
\end{proposition}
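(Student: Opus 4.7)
My plan is to prove each direction by exhibiting an explicit affine substitution $x\mapsto Mx+c$ constructed via the Chinese Remainder Theorem, which converts between the conditional form of $\GEM$ (restricted to $n$ in the cyclic coset generated by $a$ and coprime to $f(0)$) and the unconditional form of $E'$ (every sufficiently large $n$).

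For $\GEM(a,m)\neq\emptyset\Rightarrow E'(a,m)\neq\emptyset$, given $f\in\GEM(a,m)$, I would take $M$ to be a common multiple of $m$ and the radical of $f(0)$, and pick $c$ with $c\equiv a\pmod{m}$ and $(c,f(0))=1$; set $g(x):=f(Mx+c)$. For every sufficiently large $x\in\mathbb{N}$, $Mx+c$ is a positive integer coprime to $f(0)$ and $\equiv a\pmod{m}$, so $\GEM$ (ii) immediately yields $E'$ (i). For $E'$ (ii), a case analysis on whether $p\mid M$ shows that if $p\equiv a\pmod{m}$ divided $g(n)$ for every $n$, then either $p\nmid M$ (so $Mn+c$ covers all residues modulo $p$, forcing $p\mid f(0)$ and hence $p\mid M$, contradiction) or $p\mid M$ (which forces $p\mid f(0)$, since $(a,m)=1$ rules out $p\mid m$, and reduces the condition to $p\mid f(c)$, which we avoid by a refined CRT choice of $c$ imposing $f(c)\not\equiv 0\pmod{p}$ for each such $p$).

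For the reverse direction, given $f\in E'(a,m)$, apply $E'$ (ii) to produce, for each prime $p\equiv a\pmod{m}$ dividing $f(0)$, a residue $d_p\bmod p$ with $p\nmid f(d_p)$; take $N$ a large multiple of $m$ and of each such $p$, and use CRT to pick $d\equiv a\pmod{m}$ with $d\equiv d_p\pmod{p}$ for each $p$. Define $g(x):=f(Nx+d)$. Condition $\GEM$ (i) holds for $N$ large enough that $f(N+d)\neq 0$, and for $\GEM$ (ii), given $n$ with $(n,g(0))=1$ and $n\equiv a^k\pmod{m}$, the argument $Nn+d$ is large and $\equiv a\pmod{m}$, so $E'$ (i) produces a prime divisor $q\equiv a\pmod{m}$ of $g(n)=f(Nn+d)$. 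One must verify $q\nmid g(0)=f(d)$: our construction of $d$ handles the case $q\mid f(0)$, and a growth argument bounding the $q$-adic contribution to $g(n)$ of the ``old'' primes $q\mid g(0)$ (ensured by taking $v_q(N)>v_q(g(0))$ so that $v_q(g(n))=v_q(g(0))$) lets us pass to the cofactor $g(n)/\gcd(g(n),g(0)^{\infty})$ and conclude that, for large $n$, it must contain a fresh prime divisor in the progression.

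The chief technical obstacle is in the first direction, specifically primes $p\equiv a\pmod{m}$ for which $\bar f$ vanishes identically on $\mathbb{F}_p$ — equivalently, $p\mid f(y)$ for every $y\in\mathbb{Z}$. Such primes necessarily divide $f(0)$ but cannot be avoided by any shift of the argument, so the refined choice of $c$ above is simply not available. I expect to resolve this by a preliminary normalisation: exploiting that $f\in p\mathbb{Z}[x]+(x^p-x)\mathbb{Z}[x]$ for each such $p$, one passes from $f$ to a ``quotient'' polynomial that still witnesses $\GEM(a,m)$, using the crucial observation that such universal divisors never play the role of the witness prime in $\GEM$ (ii) (since they divide $f(0)$) and therefore can be removed without disturbing the $\GEM$ property.
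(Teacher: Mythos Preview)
Your $E'\Rightarrow\GEM$ direction has a genuine gap. You produce $g(x)=f(Nx+d)$ and, given $n$ with $(n,g(0))=1$, you know from $E'$(i) that $g(n)$ has \emph{some} prime divisor $q\equiv a\pmod{m}$; the difficulty is ruling out $q\mid g(0)$ when $q\nmid f(0)$. Your growth argument correctly pins down $v_q(g(n))=v_q(g(0))$ for the finitely many such $q$ (once $v_q(N)$ is large), so the cofactor $g(n)/\gcd(g(n),g(0)^\infty)$ is indeed large, but nothing in $E'$(i) tells you that this cofactor carries a prime factor $\equiv a\pmod{m}$: the hypothesis only guarantees one such prime in $g(n)$, and that one could well be an ``old'' $q\mid g(0)$. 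Size alone does not force a residue class. The paper sidesteps this by replacing the linear substitution with $F(x)=f(Ax^\ell+c)$ where $\ell=\lcm\{p-1:p\mid f(c),\ p\equiv a\pmod{m}\}$; then for any $p\equiv a$ dividing $F(0)=f(c)$ and any $n$ coprime to $F(0)$, Fermat gives $n^\ell\equiv 1\pmod{p}$, so $F(n)\equiv f(A+c)\pmod{p}$, and $A$ is chosen via CRT so that $f(A+c)\equiv f(n_p)\not\equiv 0\pmod{p}$. This forces $p\nmid F(n)$ outright, so the witness prime supplied by $E'$(i) is automatically new.

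For $\GEM\Rightarrow E'$ you correctly isolate the obstruction of primes $p$ for which $f$ vanishes identically on $\F_p$, but your proposed ``quotient polynomial'' fix via the decomposition $f\in p\Z[x]+(x^p-x)\Z[x]$ is not developed far enough to assess, and it is not clear how to extract from it a genuine polynomial still in $\GEM(a,m)$ with $p$ removed. The paper's device is concrete and different: set $C=|F(0)|$, $D=(F(1),C^\infty)$, and take $f(x)=D^{-1}F(1+CDmx)$. The Taylor expansion of $F$ about $1$ shows $f\in\Z[x]$, and for any prime $p\mid C$ (universal or not) one has $v_p(CDm)>v_p(F(1))=v_p(D)$, whence $v_p(F(1+CDmn))=v_p(F(1))$ for all $n$, so dividing by $D$ kills $p$ entirely from $f(n)$.
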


\begin{proof}
Let $f\in E'(a,m)$. Choose $c\in\Z$ such that $f(c)\ne0$,
and set
\[
\ell=\lcm\{p-1:p\mid f(c),\;p\equiv a\pmod{m}\}.
\]
By hypothesis, for each $p\mid f(c)$ with $p\equiv a\pmod{m}$, there
exists $n_p\in\Z$ with $p\nmid f(n_p)$. By the Chinese Remainder Theorem,
we can choose a single $A\in\N$ such that $A+c\equiv n_p\pmod{p}$
for every such $p$. Furthermore, we may take $A$ sufficiently large to
ensure that $f(n)$ is nonzero and has a prime factor $\equiv a\pmod{m}$
for every $n\ge A+c$.

Now consider $F(x)=f(Ax^\ell+c)$, and take $n\in\N$ coprime to $F(0)=f(c)$. Then
$F(n)=f(An^\ell+c)$ is nonzero and has a prime divisor $p\equiv a\pmod{m}$.
If $p\mid F(0)$ then $n^\ell\equiv1\pmod{p}$ by Fermat's Little Theorem,
and therefore $F(n)\equiv f(A+c)\equiv f(n_p)\not\equiv0\pmod{p}$.
This is a contradiction, so we must have $p\nmid F(0)$.
Since $n$ was arbitrary, $F\in\GEM(a,m)$.

Conversely, suppose $F\in\GEM(a,m)$, and set
\[
C=|F(0)|,
\quad
D=(F(1),C^\infty)=\prod_{p\mid C}p^{v_p(F(1))},
\quad
f(x)=D^{-1}F(1+CDmx).
\]
Considering the Taylor expansion of $F$ around $1$, we see that $f$
has integral coefficients.  Let $p$ be a prime number with $p\equiv
a\pmod{m}$. If $p\mid F(0)$ then by the definition of $D$, $p\nmid
D^{-1}F(1)=f(0)$.  If $p\nmid F(0)$ then choosing $n_p\in\Z$ to satisfy
$1+CDmn_p\equiv0\pmod{p}$ (which we can do since $p\nmid CDm$), we see
that $p\nmid f(n_p)$.

Now let $n\in\N$. Then by hypothesis $F(1+CDmn)$ has a prime divisor
$p\equiv a\pmod{m}$ such that $p\nmid C$. Since all prime divisors of
$D$ divide $C$, $p$ is not among them, and thus $p\mid f(n)$.
Therefore $f\in E'(a,m)$.
\end{proof}

Two natural sequences are obtained from Mullin's decree of choosing
$p_n$ as small as possible at each step, or as large as possible.
We call these the first and second generalised Euclid--Mullin sequences
associated to $f$, and denote them $\GEM_1(f;a,m)$ and $\GEM_2(f;a,m)$.
In light of Schur's result and the known and conjectured properties of
the original Euclid--Mullin sequences, we conjecture the following:
\begin{conjecture}[Generalised Euclid--Mullin conjecture]
\label{conj:GEM}
Let $a,m\in\N$ with $a^2\equiv1\pmod{m}$. Then
\begin{itemize}
\item[(i)] there exists $f\in\GEM(a,m)$ such that $\GEM_1(f;a,m)$ contains
every prime $p\equiv a\pmod{m}$;
\item[(ii)] for any $f\in\GEM(a,m)$, $\GEM_2(f;a,m)$ omits infinitely many
primes $p\equiv a\pmod{m}$.
\end{itemize}
\end{conjecture}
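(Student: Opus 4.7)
The two parts have opposite levels of difficulty. Part (i) is the direct analogue of Shanks' conjecture for Mullin's first sequence and is open even for $m=1$, so I would not expect to prove it outright. The heuristic plan is to choose $f$ whose reduction has a root modulo every prime $q\equiv a\pmod m$ — a product of cyclotomic factors $\Phi_m(cx)$ works for $a=1$, and Schur's polynomials \cite{Schur} handle the other classes with $a^2\equiv1\pmod m$. Then omission of $q$ would force the cumulative products $p_1\cdots p_{n-1}$ to avoid a fixed nonempty set of residues modulo $q$ for all large $n$, which is extremely implausible but appears to require genuinely new ideas about the greedy dynamics to rule out.

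Part (ii) is the accessible direction, and the plan is to adapt the analytic strategy of \cite{Booker} from $m=1$ to the cyclotomic setting. Fix $f\in\GEM(a,m)$, and for concreteness take $f=\Phi_m(cx)$ in the case $a=1$, as suggested by the abstract. Assume for contradiction that $\GEM_2(f;a,m)$ omits only finitely many primes $\equiv a\pmod m$, so that for each sufficiently large such $q$ there is a unique index $n(q)$ with $P_{n(q)}=q$. Setting $\Pi_n:=P_1\cdots P_{n-1}$ and using that $q$ is the \emph{largest} prime factor of $f(\Pi_{n(q)})$, the cofactor $f(\Pi_{n(q)})/q$ is $q$-smooth. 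Size comparison gives
\[
\log\Pi_{n(q)} \;\geq\; \frac{\log q}{\deg f}-O(1),
\]
and summing over $q\leq X$ links the growth of $\Pi_n$ to the distribution of primes in the progression.

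The heart of the argument is to turn the divisibility $q\mid\Phi_m(c\Pi_{n(q)})$ into the Frobenius statement that $c\Pi_{n(q)}$ is a primitive $m$-th root of unity modulo $q$; an effective Chebotarev density theorem (unconditional for those $m$ where Linnik-type bounds apply, under ERH in general) then counts the primes $q$ for which this is consistent with the size constraint above, and comparing with the assumption that almost all such $q$ occur yields a contradiction. The main obstacle, exactly as in \cite{Booker}, is controlling ``conspiracies'' in the smooth cofactor: the omitted small primes could in principle absorb contributions in a highly structured way, and eliminating this possibility is the delicate step that forces either an ERH hypothesis or a restriction on $m$. The weaker unconditional claim that at least one prime is omitted for infinitely many $m$ should be amenable to a cruder pigeonhole bookkeeping across $m$ that sidesteps the conspiracy obstacle entirely, by producing a single omission per $m$ from crude volume estimates rather than by counting omissions within a fixed $m$.
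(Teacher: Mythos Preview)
The statement is a \emph{conjecture}, and the paper does not prove it in full; it establishes only partial cases of (ii) (Theorems~\ref{thm:smallm}--\ref{thm:firstprime}). You correctly flag (i) as open. For (ii), however, your sketch diverges from the paper's actual method and, as written, does not close.

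The paper's engine is not an effective Chebotarev count but a quadratic-character contradiction. Assuming only finitely many primes $\{q_1,\dots,q_r\}$ are omitted, one lets $p=p_{n+1}$ be the last prime $\le x$ to appear, writes $N=|\Phi_m(cp_1\cdots p_n)|=\mu^\delta q_1^{e_1}\cdots q_r^{e_r}p^e$, and shows $N/\mu^\delta$ is not a square (via Lemma~\ref{lem:squarevalues} or Siegel). One then uses Burgess' bound, amplified by a Vinogradov-style argument over $\zeta_K$ (Proposition~\ref{prop:vin}, Theorem~\ref{thm:quadchars}), to produce a squarefree $d\equiv1\pmod m$ whose prime factors are all $\ll(q_1\cdots q_rp)^{\varphi(m)/(4\sqrt e)+\varepsilon}$ and whose Kronecker symbols at $q_i,p$ match those of an auxiliary $P$ with $\bigl(\tfrac{P}{N}\bigr)=-1$. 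When the exponent is $<1$ (i.e.\ $\varphi(m)\le 6$, or under GLH), $d\mid p_1\cdots p_n$, so $N\equiv\pm1\pmod d$, and quadratic reciprocity forces $\bigl(\tfrac{d}{N}\bigr)=1$ and $=-1$ simultaneously.

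Your route via size bounds and Chebotarev has a genuine gap. The inequality $\log\Pi_{n(q)}\ge(\log q)/\deg f-O(1)$ is automatic---$\Pi_n$ already grows super-polynomially---so ``counting the primes $q$ consistent with the size constraint'' produces no tension. Nothing in the sketch exploits the key congruence $f(\Pi_n)\equiv f(0)\pmod{\Pi_n}$, which is exactly the lever the paper pulls via reciprocity. The ``conspiracy'' to be ruled out is not smoothness of a cofactor but the possibility that $N$ is a perfect square, which is handled by Diophantine input, not analytic estimates. Likewise, the infinitely-many-$m$ result (Theorem~\ref{thm:firstprime}) is not pigeonhole across $m$: it is a direct application of the Lebesgue and St\"ormer theorems on $x^2+1=y^r$ and $x^2+1=2y^r$ to show the smallest prime cannot occur after step one when $m=2^k$ is large.
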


In this paper we make progress on the second of these
conjectures.  Our first result is a partly conditional proof of
Conjecture~\ref{conj:GEM}(ii) for $m\in\{1,2\}$. Recall that the
Extended Riemann Hypothesis (ERH) is the statement that for any number
field $L$, the Dedekind zeta function $\zeta_L(s)$ does not vanish for
$\Re(s)>\frac12$.
\begin{theorem}\label{thm:smallm}
Let $m\in\{1,2\}$ and $f\in\GEM(1,m)$. Assume either that
ERH is true or that $f$ is not of the form $c(ax+b^8)^k$ with
$c\in\Z$, $a,b,k\in\N$, $(a,b)=1$, and
$a$ odd or divisible by $32$.
Then $\GEM_2(f;1,m)$ omits infinitely many primes.
\end{theorem}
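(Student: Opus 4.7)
The plan is to extend the character-sum strategy of the first author in \cite{Booker} for $m=1$ and the second author for $m=2^k$ to the present general setting.

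The starting observation is that if a prime $q\equiv 1\pmod m$ coprime to $f(0)$ appears in $\GEM_2(f;1,m)$, say $q=P_N$, then $q$ divides $f(P_1\cdots P_{N-1})$, so
\[
\alpha\equiv P_1\cdots P_{N-1}\pmod q
\]
is a root of $f$ modulo $q$. Equivalently, $\alpha$ lies in the multiplicative subgroup $H_q\le(\Z/q\Z)^\ast$ generated by the reductions modulo $q$ of $P_1,\dots,P_{N-1}$. The strategy is to produce infinitely many $q\equiv 1\pmod m$ for which there exists a Dirichlet character $\chi\bmod q$ with $\chi(P_i)=1$ for every $i<N$ but $\chi(\alpha)\ne 1$ for every root $\alpha$ of $f$ modulo $q$; any such $q$ must then be omitted from $\GEM_2(f;1,m)$.

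The construction of $\chi$ (of some order $r$) proceeds in two steps. First, one restricts to a Chebotarev class of primes $q$ defined by the splitting of $f(y^r)$ over $\Q(\zeta_m)$, chosen so that no root of $f\bmod q$ is an $r$-th power residue. Second, within that class, one selects $q$ for which the already-fixed primes $P_1,\dots,P_{N-1}$ are $r$-th power residues modulo $q$. Under ERH, the effective Chebotarev theorem of Lagarias--Odlyzko gives enough control over the joint distribution to solve both steps simultaneously with $r$ as large as needed, yielding the ERH branch of the theorem. Unconditionally, one is limited to weaker zero-free regions, and the construction succeeds only when some root $\alpha$ of $f\bmod q$ can be separated from the $r$-th powers by a character of order $r\in\{2,4,8\}$ on a positive density of $q$.

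The exceptional form $f=c(ax+b^8)^k$ obstructs the unconditional argument because its unique root modulo $q$ is $\alpha\equiv -b^8/a\pmod q$: the factor $b^8$ is automatically an $8$-th power, and the stated conditions on $a$ (odd or divisible by $32$) force $-1/a$ to be an $8$-th power residue for a positive density of $q\equiv 1\pmod m$, so $\alpha$ is an $8$-th power for a thick set of $q$ and no character of order dividing $8$ can satisfy $\chi(\alpha)\ne 1$. Outside this form, explicit analysis of $f$ shows that some root of $f\bmod q$ fails to be an $r$-th power for some $r\in\{2,4,8\}$ on a positive density of $q$, and the unconditional argument succeeds. The main technical obstacle is the second step above---ensuring $\chi(P_i)=1$ for $i<N$ while retaining $\chi(\alpha)\ne 1$---which under ERH follows from Lagarias--Odlyzko but unconditionally must be handled by a delicate Linnik-type argument tailored to both the arithmetic of $f$ and the rigidity of the recursion.
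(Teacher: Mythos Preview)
Your outline has a structural gap that the paper's argument avoids. You want a character $\chi\bmod q$ with $\chi(P_i)=1$ for every $i<N$, but $N=N(q)$ is the position at which $q$ would appear, so it is determined only after $q$ is fixed; conversely your selection of $q$ (via Chebotarev/Lagarias--Odlyzko) is meant to depend on $P_1,\dots,P_{N-1}$. This circularity is never resolved. Even if one tries to fix a large initial segment $P_1,\dots,P_n$ first, the number of constraints $\chi(P_i)=1$ grows without bound, and neither Lagarias--Odlyzko nor any Linnik-type input gives a usable prime $q$ satisfying unboundedly many such power-residue conditions while remaining outside the segment.

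The paper's argument is, in a sense, the reciprocity-dual of what you propose, and it is precisely this duality that removes the difficulty. Assuming only finitely many primes $q_1,\dots,q_r$ are omitted, one takes $p=p_{n+1}$ to be the prime $\le x$ occurring latest in the sequence and observes that $N=|f(p_1\cdots p_n)|$ has prime support contained in the \emph{bounded} set $\{q_1,\dots,q_r,p\}$. One then builds a quadratic character whose modulus $d$ is a product of \emph{small} primes (so that $d\mid p_1\cdots p_n$), with prescribed Kronecker symbols at the $r+1$ primes $q_1,\dots,q_r,p$; this is exactly what Theorem~\ref{thm:quadchars} (a Burgess/hyperbola estimate, not Linnik) delivers, with prime factors of $d$ bounded by $(q_1\cdots q_r p)^{1/(4\sqrt e)+\varepsilon}$. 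The contradiction is then $\bigl(\tfrac{d}{N}\bigr)=-1$ versus $\bigl(\tfrac{N}{d}\bigr)=\bigl(\tfrac{\pm f(0)}{d}\bigr)=1$, after first reducing so that $f$ is squarefree with $f(0)$ a square. Showing $N$ is not a square requires Siegel's theorem for $\deg f\ge 3$, a substantial Pell-equation analysis together with a Chebotarev argument (Lemma~\ref{lem:chebotarev}) for $\deg f=2$, and an $8$th-power dichotomy for $\deg f=1$. The ERH branch for the exceptional linear form also uses a character of modulus $d$ (not of modulus $q$): Theorem~\ref{thm:orderl} produces an order-$\ell$ character of small squarefree conductor matching a fixed character $\xi\bmod\ell^2$ on the primes dividing $N$, so that $\chi(N)=\xi(N)\ne 1$ while $\chi(N)=\chi(f(0))=1$. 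Your proposal does not contain these ingredients, and your account of why the form $c(ax+b^8)^k$ is exceptional (in terms of $-b^8/a$ being an $8$th power residue modulo $q$) does not connect to the actual obstruction, which is that $b'N'$ can be a perfect $8$th power in $\Z$.
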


Next we study the sequences generated by the cyclotomic polynomials $\Phi_m(cx)$ for
suitable $c\in\Z$. This generalises the second author's master's thesis
\cite{Simon}, which treated the case when $m$ is a power of $2$.
A common choice in proofs of the infinitude of primes $\equiv1\pmod{m}$
is $\Phi_m(mx)$, which is a $\GEM(1,m)$ polynomial for all $m>1$. (For
$m=1$, the original Euclid--Mullin sequences are generated from
$\Phi_1(-x)$.)  In fact for any $c$ divisible by the largest prime factor
of $m$, every prime $p$ dividing $\Phi_m(cn)$ for some $n$ satisfies
$p\equiv1\pmod{m}$. However, there are many more possible choices,
as the following proposition shows.
\begin{proposition}
Let $m\in\N$ and $c\in\Z$. Then $\Phi_m(cx)$ is a $\GEM(1,m)$ polynomial
if and only if $c\in S(m)$, where
\[
S(m) \coloneq \begin{cases}
\Z \setminus \{0,1,2\}     & \text{if }m = 1,\\
2\N \cup \bigl\{c\in-\N:c\nmid 2^k+1\text{ for all }k\ge0\bigr\} & \text{if }m = 2,\\
\Z \setminus \{-2,-1,0,1\} & \text{if }m = 3,\\
\Z \setminus \{-1,0,1,2\}  & \text{if }m = 6,\\
\Z \setminus \{-1,0,1\}    & \text{otherwise}.
\end{cases}
\]
\end{proposition}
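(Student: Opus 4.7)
The plan is to unpack Definition~\ref{def:GEM} directly for $f(x) = \Phi_m(cx)$. Two preliminary observations are essential: (a) $f(0) = \Phi_m(0)$ equals $-1$ for $m=1$ and $1$ for $m \ge 2$, so the coprimality condition $(n, f(0)) = 1$ is vacuous whenever $m \ge 2$; and (b) by the standard argument with the order of $a$ modulo $p$, any prime $p$ dividing $\Phi_m(a)$ for an integer $a$ either divides $m$ or satisfies $p \equiv 1 \pmod m$. Consequently, for $m \ge 2$ condition (ii) amounts to: for every $n \in \N$ with $n \equiv 1 \pmod m$, $\Phi_m(cn)$ has a prime divisor not dividing $m$. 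Condition (i), $f(1) = \Phi_m(c) \ne 0$, excludes exactly $(m,c) \in \{(1,1),(2,-1)\}$, since these are the only integer primitive $m$-th roots of unity.

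For $m = 1$, $\Phi_1(cx) = cx - 1$ fails to have a prime divisor iff $cn - 1 \in \{-1,0,1\}$ for some $n \ge 1$, i.e., $c \in \{0,1,2\}$; this yields $S(1)$. For $m = 2$ the requirement becomes: $cn + 1$ has an odd prime divisor for every odd $n \ge 1$, equivalently $cn + 1 \notin \{\pm 2^k : k \ge 0\}$ for all such $n$. Splitting on $c$: positive even $c$ always works (the value is odd and $\ge 3$); $c = 0$ fails at $n = 1$; for positive odd $c$, taking $k$ to be the multiplicative order of $2$ modulo $c$ produces an odd witness $n = (2^k - 1)/c$ to failure; and for $c = -c'$ with $c' \in \N$, failure is equivalent to solving $c' n = 2^k + 1$ with $n$ odd and $n \ge 1$, which occurs iff $c' \mid 2^k + 1$ for some $k \ge 0$ (oddness of $n$ is automatic, or handled by a trivial parity check). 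This produces $S(2)$.

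For $m \ge 3$ condition (i) holds automatically since $\Phi_m$ has no integer roots. The values $c \in \{-1, 0, 1\}$ all fail at $n = 1$: $\Phi_m(0) = 1$, $\Phi_m(1)$ is either $1$ (when $m$ is not a prime power) or a prime dividing $m$ (when $m = p^k$), and $\Phi_m(-1)$ reduces to the same alternatives via the identities $\Phi_m(-x) = \Phi_{2m}(x)$ for odd $m > 1$, $\Phi_m(-x) = \Phi_{m/2}(x)$ for $m \equiv 2 \pmod 4$ with $m > 2$, and $\Phi_m(-x) = \Phi_m(x)$ for $4 \mid m$. For $|c| \ge 2$ and $n \equiv 1 \pmod m$ with $n \ge 1$, we have $|cn| \ge 2$, so Bang--Zsigmondy provides a prime divisor of $\Phi_m(cn)$ not dividing $m$ except in the exceptional pairs $(m, cn) \in \{(3,-2),(6,2)\}$. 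Each of these is realizable only at $n = 1$ (larger admissible $n$ would force $|c| < 2$), yielding the extra exclusions $c = -2$ for $m = 3$ and $c = 2$ for $m = 6$; no further exclusions arise for other $m \ge 4$.

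The main obstacle is the $m = 2$ case, whose description hinges on the Diophantine condition $|c| \mid 2^k + 1$ with no cleaner closed form; in particular one must work through the parity of $n$ carefully and handle $c' = 2$ (via $k = 0$) separately from odd $c'$ (via the order of $2$). The remainder of the argument is bookkeeping: apply Bang--Zsigmondy with positive arguments, and reduce negative arguments via the standard identities for $\Phi_m(-x)$.
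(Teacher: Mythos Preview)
Your argument is correct and follows essentially the same route as the paper: both proofs rest on Zsigmondy's theorem together with the identities relating $\Phi_m(-x)$ to $\Phi_{2m}(x)$, $\Phi_{m/2}(x)$, or $\Phi_m(x)$, and both handle the $m=2$ case via the order of $2$ modulo $|c|$. The only organisational difference is that the paper first compiles a uniform list of exceptional values of $\Phi_m(x)$ over all $m$ and all $x\in\Z\setminus\{-1,0,1\}$, and then reads off the constraints on $c$, whereas you treat $m=1$, $m=2$, and $m\ge3$ as separate cases from the outset; this is a matter of presentation rather than substance.
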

\begin{proof}
Combining Zsigmondy's theorem \cite{Zsigmondy} with the identities
\begin{align*}
m \equiv 1 \pmod2&\implies \Phi_m(-x) = \pm\Phi_{2m}(x),\\
m \equiv 2 \pmod4&\implies \Phi_m(-x) = \pm\Phi_{m/2}(x),\\
m \equiv 0 \pmod4&\implies \Phi_m(-x) = \Phi_m(x),
\end{align*}
we see that for any $x\in\Z\setminus\{-1,0,1\}$,
$\Phi_m(x)$ is nonzero and has a prime divisor $p\equiv1\pmod{m}$ except
in the following cases:
\begin{itemize}
\item $m=1$ and $x=2$;
\item $m=2$ and $|x+1|$ is a power of $2$;
\item $m=3$ and $x=-2$;
\item $m=6$ and $x=2$.
\end{itemize}
One can also see that the values of $\Phi_m(x)$ for $x\in\{-1,0,1\}$
are never suitable except when $(m,x)=(1,-1)$.

In order for $\Phi_m(cx)$ to be $\GEM(1,m)$ polynomial, we need $cn$ to
avoid the exceptions above for every natural number $n\equiv1\pmod{m}$.
For $m\ne2$ it is straightforward to see that this is equivalent to
$c\in S(m)$.

For $m=2$ and odd $c>0$, there are infinitely many odd $n\in\N$ such
that $cn+1$ is a power of $2$ (choose $k\in\N$ divisible by the order
of $2\bmod c$ and set $n=(2^k-1)/c$). Similarly, for $m=2$ and $c<0$,
there exists odd $n\in\N$ such that $|cn+1|$ is a power of $2$ if and
only if $c\mid 2^k+1$ for some $k$.
\end{proof}

Again we can give a conditional proof of Conjecture~\ref{conj:GEM}(ii)
in this setting:
\begin{theorem}\label{thm:ERH}
Let $m\in\N$ and $c\in S(m)$. Then, assuming ERH,
$\GEM_2(\Phi_m(cx);1,m)$ omits infinitely many primes $p\equiv1\pmod{m}$.
\end{theorem}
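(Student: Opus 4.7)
The plan is to argue by contradiction, generalising the approach of \cite{Booker} (the case $m=1$) and \cite{Simon} (the case $m=2^k$). Suppose $\GEM_2(\Phi_m(cx);1,m)$ omits only finitely many primes $p\equiv1\pmod m$, let $\mathcal{E}$ denote the finite union of these omitted primes with the prime divisors of $cm$, and set $A_n:=P_1\cdots P_n$. The starting point is the divisibility dictionary: for a prime $q\equiv1\pmod m$ with $q\nmid cm$, $q\mid\Phi_m(cN)$ is equivalent to $cN$ having multiplicative order exactly $m$ in $(\Z/q)^{\times}$. Consequently $cA_{n-1}$ has order $m$ modulo $P_n$, while by maximality no prime $q\equiv1\pmod m$ with $q>P_n$ and $q\notin\mathcal{E}$ has this property.

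Next I would extract a recursive structure on the sequence. Since $\Phi_m(0)\in\{\pm1\}$ for $m>1$, no $P_i$ with $i<n$ divides $\Phi_m(cA_{n-1})$, so its prime factorisation is supported on $\mathcal{E}\cup\{P_n\}\cup\{P_j:j>n,\,P_j<P_n\}$. This forces at worst doubly-exponential growth of $\log A_n$ and expresses the residual part $\Phi_m(cA_{n-1})/P_n$ as a product of members of $\mathcal{E}$ and later, smaller sequence terms --- already a restrictive combinatorial constraint. I would then apply the effective Chebotarev density theorem under ERH to the Kummer tower $\Q\subset\Q(\zeta_m)\subset\Q(\zeta_m,\sqrt[m]{cA_{n-1}})$, together with a M\"obius inversion over divisors of $m$ to isolate the \emph{order exactly $m$} condition, aiming to produce, for some $n$, a prime $q>P_n$ with $q\equiv1\pmod m$, $q\notin\mathcal{E}$, and $\operatorname{ord}_q(cA_{n-1})=m$, contradicting the maximality rule.

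The principal obstacle is the calibration in this last step. For a fixed $N$, only finitely many primes $q$ satisfy $\operatorname{ord}_q(cN)=m$ (namely the prime divisors of the fixed integer $\Phi_m(cN)$), so Chebotarev cannot be applied na\"ively; one must instead track Frobenius data jointly across many sequence indices, balancing the square-root-ERH savings in counting primes against the doubly-exponential growth of $\log A_n$ and the recursive bounds from the second paragraph. This calibration is where ERH becomes indispensable, since unconditional Chebotarev error terms are too large to force the desired prime into the required window $(P_n,Y]$.
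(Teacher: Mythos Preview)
Your proposal has a genuine gap at the central step. You aim to produce, for some $n$, a prime $q>P_n$ with $q\equiv1\pmod m$, $q\notin\mathcal E$, and $\operatorname{ord}_q(cA_{n-1})=m$, claiming this contradicts maximality. But such a $q$ cannot exist, by the very definition of $P_n$ as the largest prime factor of $\Phi_m(cA_{n-1})$ congruent to $1\pmod m$ and not dividing $\Phi_m(0)$; this holds whether or not the sequence omits only finitely many primes. The target of your contradiction is thus independent of the cofinite hypothesis you are trying to refute, and no density argument (Chebotarev or otherwise) can conjure up a divisor of a fixed integer that is not there. You correctly flag this obstruction in your third paragraph, but ``tracking Frobenius data jointly across many sequence indices'' is not a mechanism, and no concrete route past the obstacle is given.

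The paper's argument is quite different and does not use $m$th-power residues or Kummer extensions at all. One takes $x$ large, lets $p=P_{n+1}$ be the prime $\le x$ occurring with the largest index, and observes that the cofinite hypothesis forces $M=|\Phi_m(cA_n)|$ to factor as $\mu^\delta q_1^{e_1}\cdots q_r^{e_r}p^e$, where $\mu$ is the largest prime factor of $m$ and the $q_i$ are the finitely many omitted primes. One shows $N=M/\mu^\delta$ is not a perfect square (via Lemma~\ref{lem:squarevalues} or Lemma~\ref{lem:siegel}), so some prime $P$ has $\bigl(\tfrac{P}{N}\bigr)=-1$. The key step is then Theorem~\ref{thm:quadchars}: under GLH (hence under ERH) it furnishes a squarefree $d$ with every prime factor $\equiv1\pmod m$ and $\ll_\varepsilon(q_1\cdots q_r p)^\varepsilon$, hence dividing $A_n$, such that $\bigl(\tfrac{d}{q_i}\bigr)=\bigl(\tfrac{P}{q_i}\bigr)$ and $\bigl(\tfrac{d}{p}\bigr)=\bigl(\tfrac{P}{p}\bigr)$. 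Since $d\mid A_n$ one has $M\equiv\pm1\pmod d$, and quadratic reciprocity yields $\bigl(\tfrac{d}{N}\bigr)=1$; but by construction $\bigl(\tfrac{d}{N}\bigr)=\bigl(\tfrac{P}{N}\bigr)=-1$, a contradiction. The essential idea you are missing is this indirection: rather than seek new prime divisors of $\Phi_m(cA_n)$, one exploits the congruence $\Phi_m(cA_n)\equiv\pm1\pmod{A_n}$ to test the factorisation of $N$ against \emph{quadratic} characters supported on the primes dividing $A_n$.
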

\noindent
For $m>2$ we will see that we can reduce to
the Generalised Lindel\"of Hypothesis
(GLH) for Dirichlet $L$-functions, which is implied by ERH.
Moreover, we can remove all conditional hypotheses in some cases:
\begin{theorem}\label{thm:unconditional}
Let $m\in\{1,\ldots,10,12,14,18\}$ and $c\in S(m)$,
with the following further assumptions:
\begin{align*}
m=1&\implies c\in\N\cup(2\Z\setminus32\Z)\cup\{-5\}\cup\bigl\{1-2^k:k\in\N\bigr\},\\
m=2&\implies c\in-\N\cup(2\Z\setminus32\Z).
\end{align*}
Then $\GEM_2(\Phi_m(cx);1,m)$ omits infinitely many primes $p\equiv1\pmod{m}$.
\end{theorem}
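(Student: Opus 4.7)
The plan is to split by $m$. For $m\in\{1,2\}$ the theorem reduces essentially to Theorem~\ref{thm:smallm} plus bespoke work for a few exceptional $c$; for the remaining $m$ I would follow the proof of Theorem~\ref{thm:ERH} after replacing ERH by unconditional inputs that happen to be strong enough exactly in the regime $\phi(m)\le6$.

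For $m=1$, note that $\Phi_1(cx)=cx-1$ is linear, so the exceptional form $c'(ax+b^8)^k$ in Theorem~\ref{thm:smallm} can only hold with $k=1$, and matching constant terms forces $c'=-1$, $b=1$, $a=-c$. Hence the form applies only for $c<0$, and the condition ``$a$ odd or $a\equiv0\pmod{32}$'' translates into ``$c$ odd or $c\equiv0\pmod{32}$''. Consequently Theorem~\ref{thm:smallm} is unconditional for exactly $c\in\N\cup(2\Z\setminus32\Z)$, matching the hypothesis of Theorem~\ref{thm:unconditional}. The same analysis handles $m=2$. The residual values $c\in\{-5\}\cup\{1-2^k:k\in\N\}$ are negative odd integers lying outside this range; for $c=1-2^k$ I would exploit the identity $|(1-2^k)n-1|=(2^k-1)n+1$ to transfer the omission problem to the positive case $c=2^k-1\in\N$ already handled, while $c=-5$ would be verified directly.

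For $m\in\{3,4,5,6,7,8,9,10,12,14,18\}$, which is precisely $\{m\ge3:\phi(m)\le6\}$, I would replay the proof of Theorem~\ref{thm:ERH}. That proof reduces the omission claim to a Chebotarev-type lower bound: for each candidate omitted prime $p$, one must produce arbitrarily large primes $q\equiv1\pmod m$ satisfying a prescribed Frobenius condition in a number field $L$ whose degree is bounded in terms of $\phi(m)$. Under ERH this comes from the effective Chebotarev theorem; unconditionally, for $[L:\Q]$ small one can appeal to Siegel--Walfisz combined with a subconvex or log-free zero-density bound for Dirichlet $L$-functions of conductor dividing a power of $m$. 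For the moduli $m\le18$ listed, potential Siegel zeros can be ruled out by direct computation, so these ingredients suffice.

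The principal obstacle will be to quantify this substitution: confirming that the unconditional estimates beat the ``trivial'' density loss coming from $[L:\Q]$ precisely when $\phi(m)\le6$, and doing so uniformly in the auxiliary data produced by the \GEM\ construction. The ad hoc treatment of the exceptional $c$ values for $m\in\{1,2\}$ is elementary but case-heavy.
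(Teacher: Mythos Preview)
The main gap is for $m\ge3$. The paper's proof of Theorem~\ref{thm:ERH} (and hence of Theorem~\ref{thm:unconditional}) is not a Chebotarev argument in a field depending only on $m$; it is a quadratic character-sum argument. Assuming only finitely many primes $q_1,\dots,q_r$ are omitted and letting $p$ be the last prime $\le x$ to enter the sequence, one needs a squarefree $d$ whose prime factors are all $\equiv1\pmod m$ and all $<p$, matching prescribed values of $\bigl(\tfrac{\cdot}{q_1}\bigr),\dots,\bigl(\tfrac{\cdot}{q_r}\bigr),\bigl(\tfrac{\cdot}{p}\bigr)$. This is supplied by Theorem~\ref{thm:quadchars}, whose engine is Burgess' bound: unconditionally the prime factors of $d$ are $\ll_{m,\varepsilon}(q_1\cdots q_r p)^{\varphi(m)/(4\sqrt{e})+\varepsilon}$, and the exponent is $<1$ exactly when $\varphi(m)\le6$, i.e.\ for the listed $m$. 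Your proposed substitute---Siegel--Walfisz plus zero-density for $L$-functions of conductor dividing a power of $m$, with Siegel zeros excluded by computation---cannot work, because the relevant characters have conductors $q_1,\dots,q_r,p$, which are unbounded primes $\equiv1\pmod m$; no finite computation handles their Siegel zeros, and Siegel--Walfisz is far too weak to force the prime factors of $d$ below $p$. The threshold $\varphi(m)\le6$ is the Burgess threshold $\varphi(m)/(4\sqrt{e})<1$, not a degree bound in a Chebotarev problem.

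For the exceptional $m=1$ values $c\in\{-5\}\cup\{1-2^k\}$, your transfer is also off: the identity $|(1-2^k)n-1|=(2^k-1)n+1$ does not match the $\GEM_2$ sequence of any positive $c$ (for $c=2^k-1$ one has $\Phi_1((2^k-1)n)=(2^k-1)n-1$, a different number), and ``verify $c=-5$ directly'' has no finitary meaning. The paper instead folds these $c$ into the same Burgess argument used for $m\ge3$, after noting that $2$ occurs as the first or second term of the sequence, so that eventually $N\equiv3\pmod4$ and hence $N$ is never a square.
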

\noindent
(The restrictions on $c$ for $m\in\{1,2\}$
arise because $\Phi_m(cx)$ has degree $1$;
in view of Theorem~\ref{thm:smallm},
that is the only obstruction to giving an unconditional
proof of the conjecture in those cases.)

Finally, we show unconditionally that at least one prime
is omitted for infinitely many $m$:
\begin{theorem}\label{thm:firstprime}
Let $m>2$ be a power of $2$, and let $c\in S(m)$.
Then $\GEM_2(\Phi_m(cx);1,m)$ omits the smallest prime $p\equiv1\pmod{m}$
if $(m,c)\notin\{(4,\pm2),(4,\pm3),(8,\pm2)\}$,
and the second smallest such prime otherwise.
\end{theorem}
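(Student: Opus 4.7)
The plan is to show that at each step $n$, the value $F_n := \Phi_m(cP_1\cdots P_{n-1})$ has a prime divisor $\equiv 1\pmod m$ strictly exceeding the target prime --- $q$, the smallest prime $\equiv 1 \pmod m$, in the generic case, or $q'$, the second smallest such, in the exceptional cases. Writing $m = 2^k$ with $k \ge 2$, we have $\Phi_m(y) = y^{m/2} + 1$; the standard order argument shows that every odd prime factor of $\Phi_m(y)$ is $\equiv 1 \pmod m$, and for $m \ge 4$ one has $v_2(\Phi_m(y)) \le 1$. Writing $F_n = 2^{a_n}q^{v_n}R_n$ with $\gcd(R_n,2q)=1$, every prime factor of $R_n$ lies in $\{p \equiv 1 \pmod m : p > q\}$, so it is enough to show $R_n > 1$.

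The core estimate controls $v_n = v_q(F_n)$. Since $q \nmid m$, Hensel's lemma produces a unique primitive $m$-th root of unity $\alpha \in \mathbb{Z}_q$ with $\alpha \equiv y_n \pmod q$ (when $q \mid F_n$), and since $v_q(y_n - \zeta) = 0$ for the other primitive roots $\zeta$, one obtains $v_q(F_n) = v_q(y_n - \alpha)$. Choosing an integer representative $\tilde\alpha$ of $\alpha$ modulo $q^{v_n}$ with $|\tilde\alpha| \le q^{v_n}/2$, either $y_n = \tilde\alpha$ and $|y_n| \le q^{v_n}/2$, or $q^{v_n} \le 2|y_n|$. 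Combined with $|F_n| \ge |y_n|^{m/2} - 1$, this yields
\[
R_n \ge \frac{|y_n|^{m/2} - 1}{4|y_n|},
\]
which exceeds $1$ whenever $|y_n| \ge 5$ and $m \ge 4$. For $n \ge 2$ the bound is automatic, since $|y_n| \ge |c|q \ge 10$ (using $|c| \ge 2$ from $S(m) \subseteq \Z\setminus\{-1,0,1\}$), so $P_n > q$.

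All that remains is the base case $n = 1$, where $y_1 = c$ may be too small. Here the exceptions arise: $\Phi_m(c)$ must be supported only at $\{2,q\}$ for $P_1 = q$ to occur. The resulting $S$-unit equation $\Phi_m(c) = \pm 2^a q^b$ has finitely many solutions for each $m$, and a finite check reduces them to the listed exceptions $(4,\pm2), (4,\pm3), (8,\pm2)$. In each such case $P_1 = q$ by direct computation, and for $n \ge 2$ the divisibility $q \mid P_1 \mid y_n$ forces $v_q(F_n) = 0$ (since $q \mid y_n$ gives $F_n \equiv 1 \pmod q$); the same size argument with $q$ replaced by $q'$ then gives $R'_n > 1$, so $P_n \ne q'$. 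The main obstacle is the $q$-adic edge case in which $y$ lies very close to $\alpha$ in $\mathbb{Z}_q$ and $v_q(\Phi_m(y))$ becomes anomalously large; this is precisely what produces the listed exceptions at $n=1$, and the technical heart of the proof is the finite $S$-unit computation verifying that no further $c \in S(m)$ triggers this behavior.
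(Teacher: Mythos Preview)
Your inequality $R_n \ge (|y_n|^{m/2}-1)/(4|y_n|)$ only follows from the \emph{second} branch of your dichotomy, namely $q^{v_n}\le 2|y_n|$. You never rule out the first branch for $n\ge 2$, and it is not vacuous: with $m=4$, $q=5$ and $y=7$ one has $\Phi_4(7)=50=2\cdot 5^2$, so $q^{v}=25>14=2|y|$ and $R=1$, directly contradicting your asserted bound $(49-1)/28>1$. Having $|y_n|\ge 10$ does nothing to exclude this branch; what is really needed is to show that $\Phi_m(y_n)=2^aq^v$ forces $v$ to be small. That is precisely the content of the classical Diophantine results of Lebesgue ($x^2+1=y^r\Rightarrow r=1$) and St\"ormer ($x^2+1=2y^r$, $r>2\Rightarrow (x,y,r)=(239,13,4)$), which the paper invokes to get $v\le 2$ (noting $239\equiv 3\pmod 4$ rules out the exceptional solution here). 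Once $v\le 2$, the size bound $|y_n|^{m/2}+1\le 2q^2$ is incompatible with $|y_n|\ge |c|\,p_1>2q$, yielding $n=0$.

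There is a second gap at $n=1$. Your ``finite $S$-unit computation'' is one equation $\Phi_m(c)\in\{q,2q,2q^2\}$ for \emph{each} $m=2^k$, but there are infinitely many $k$, and $q$ depends on $k$. To reduce to finitely many cases you need, in addition to Lebesgue/St\"ormer, an explicit upper bound on the least prime $q\equiv 1\pmod{2^k}$. The paper supplies this via the Bennett--Martin--O'Bryant--Rechnitzer estimate, obtaining $2q^2<3^{2^{k-1}}$ for all $k\ge 4$; this forces $|c|=2$, and the remaining pairs $(k,c)$ with $k\in\{2,3\}$ or $|c|=2$ are then a genuine finite check. Without both ingredients your base case is not complete.
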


\subsection{Outline}
The contents of the paper are as follows:
\begin{itemize}
\item In Section~\ref{sec:unconditional} we develop the character sum
machinery necessary for Theorem~\ref{thm:unconditional},
culminating in the proof of Theorem~\ref{thm:quadchars}.
This is modelled on the proof of \cite[Lemma~4]{Booker}, and
the main ingredient is again Burgess' estimate \cite{Burgess} for short
character sums, leveraged by the Dirichlet hyperbola method.
\item Section~\ref{sec:ERH} derives a similar result for higher-order
characters, Theorem~\ref{thm:orderl}, to be used in the proof of
Theorems~\ref{thm:smallm} and \ref{thm:ERH}. Since progress toward GLH for Dedekind zeta
functions of non-abelian extensions of $\Q$ is much more limited than for
abelian extensions, we derive this conditionally on ERH.
\item Section~\ref{sec:diophantine} derives or quotes a few
miscellaneous results from Diophantine equations that are
needed in the proofs of Theorems~\ref{thm:smallm}--\ref{thm:firstprime}.
\item Finally, we assemble these ingredients in Section~\ref{sec:proof}
to complete the proofs.
\end{itemize}

\subsection*{Acknowledgements}
The first author is grateful to Dan Fretwell for suggesting
the study of cyclotomic analogues of the Euclid--Mullin sequences.

\section{Unconditional analytic estimates}\label{sec:unconditional}
The goal of this section is to prove an analogue of \cite[Lemma~4]{Booker}
adapted to the setting of cyclotomic polynomials; see
Theorem~\ref{thm:quadchars}.
Fix $m\in\N$, let $K=\Q(e^{2\pi i/m})$ be the $m$th cyclotomic field,
and denote its ring of integers by $\OK=\Z[e^{2\pi i/m}]$. Let
$\zeta_K(s)$ denote the Dedekind zeta function of $K$, so that
\[ \zeta_K(s)=\prod_\pp\frac1{1-N(\pp)^{-s}}, \]
where the product runs over prime ideals $\pp\subset\OK$.
Define coefficients $a(n)$ by
\[
\zeta_K(s)=\sum_{n=1}^\infty\frac{a(n)}{n^s}.
\]
Further set
$T=\{\pp:\pp\mid m\OK\text{ and }f(\pp)=1\}\cup\{\pp:f(\pp)\text{ is odd and} > 1\}$, where
$f(\pp)$ denotes the inertia degree of $\pp$,
and define $a^\flat(n)$ such that
\[
\zeta_K(s)\prod_{\pp\in T}(1-N(\pp)^{-s})=\sum_{n=1}^\infty\frac{a^\flat(n)}{n^s}.
\]

\begin{lemma}\label{lem:burgess}
Let $\chi$ be a character of modulus $q$ and conductor
$q^\ast>1$. Let $q_1$ be the maximal cubefree unitary divisor of $q^\ast$, and
set $q_2=q^\ast/q_1$, $q_0=\prod_{\substack{p\mid q\\p\nmid q^\ast}}p$.
Then
\[
\sum_{n=M+1}^{M+N}\chi(n)\ll_{r,\varepsilon}
q_0^\varepsilon q_1^{\frac{r+1}{4r^2}+\varepsilon}q_2^{\frac1r}N^{1-\frac1r}
\quad\text{for all }M,N,r\in\N\text{ and }\varepsilon>0.
\]
\end{lemma}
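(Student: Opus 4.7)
The plan is to reduce the character sum over $\chi$ to one over its primitive inducing character $\chi^\ast$ modulo $q^\ast = q_1 q_2$, factor $\chi^\ast$ as $\chi_1\chi_2$ according to the coprime decomposition $q_1\cdot q_2$, and then apply Burgess' bound to the cubefree factor $\chi_1$ after splitting the summation range into arithmetic progressions modulo $q_2$.

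First I would carry out the standard imprimitivity reduction. Since $\chi(n)$ equals $\chi^\ast(n)$ when $(n,q)=1$ and vanishes otherwise, and since $\chi^\ast$ already enforces $(n,q^\ast)=1$, only coprimality with $q_0$ needs to be imposed by hand. Möbius inversion gives
\[
\sum_{n=M+1}^{M+N}\chi(n) = \sum_{d\mid q_0}\mu(d)\chi^\ast(d)\sum_{M/d<n'\le(M+N)/d}\chi^\ast(n'),
\]
and since $\sum_{d\mid q_0}d^{-(1-1/r)}\le 2^{\omega(q_0)}\ll_\varepsilon q_0^\varepsilon$, this reduces the problem to bounding sums of $\chi^\ast$ over ranges of length $\approx N/d$, with the $q_0^\varepsilon$ factor absorbed at the end.

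Next I would use the fact that $q_1$ is a unitary divisor of $q^\ast$, so $(q_1,q_2)=1$, to factor $\chi^\ast=\chi_1\chi_2$ with $\chi_i$ primitive of modulus $q_i$. Partitioning by reduced residues $b\bmod q_2$ and substituting $n=b+q_2k$ gives, for each such class,
\[
\sum_{\substack{M<n\le M+N\\n\equiv b\,(q_2)}}\chi^\ast(n) = \chi_2(b)\chi_1(q_2)\sum_k\chi_1(k+q_2^{-1}b),
\]
where $q_2^{-1}$ denotes an inverse modulo $q_1$ and the inner sum runs over at most $\lceil N/q_2\rceil$ consecutive integers. Since $q_1$ is cubefree, Burgess' estimate \cite{Burgess} bounds this inner sum by $q_1^{(r+1)/(4r^2)+\varepsilon}(N/q_2)^{1-1/r}$ (falling back to the trivial bound when $N/q_2$ is small, which is absorbed by the stated form). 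Summing over the $\varphi(q_2)\le q_2$ reduced classes produces the factor $q_2\cdot q_2^{-(1-1/r)}=q_2^{1/r}$.

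The main obstacle is conceptual rather than technical: Burgess' bound gives nontrivial power-saving only for cubefree (or prime-power) modulus, so it cannot be applied directly to $\chi^\ast$ when $q_2>1$. The reduction to progressions modulo $q_2$ sidesteps this but sacrifices all cancellation from $\chi_2$, producing the factor $q_2^{1/r}$ in the final bound — essentially the trivial bound for the non-cubefree part in disguise. Assembling the Möbius step with this application of Burgess then yields the stated estimate.
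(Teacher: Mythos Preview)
Your proof is correct and follows essentially the same route as the paper: a M\"obius sieve over divisors of $q_0$ reduces to sums of the primitive character $\chi^\ast$, and splitting into residue classes modulo $q_2$ reduces to Burgess' cubefree-modulus bound applied to $\chi_1$ modulo $q_1$. The paper carries out these two reductions in the opposite order (citing Iwaniec--Kowalski for the progression step and then sieving), but the substance is identical.
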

\begin{proof}
When $\chi$ is primitive and $q_2=1$, this is the result of Burgess
\cite[Theorem~2]{Burgess}. As explained in \cite[(12.56)]{Iwaniec-Kowalski}, we can
extend to non-principal primitive characters by breaking into progressions modulo
$q_2$ and applying Burgess' result. Finally, we can extend to all
non-principal characters by sieving:
\[
\sum_{n=M+1}^{M+N}\chi(n)
=\sum_{n=M+1}^{M+N}\sum_{d\mid(n,q_0)}\mu(d)\chi^\ast(n)
=\sum_{d\mid q_0}\mu(d)\chi^\ast(d)
\sum_{\frac{M}{d}<n\le\frac{M+N}{d}}\chi^\ast(n)
\ll_{r,\varepsilon}q_0^\varepsilon q_1^{\frac{r+1}{4r^2}+\varepsilon}q_2^{\frac1r}N^{1-\frac1r}.
\]
\end{proof}

\begin{lemma}\label{lem:GLH}
Let $\chi$ be a non-trivial character of modulus $q$.
Assuming GLH for Dirichlet $L$-functions, we have
\[
\sum_{n\le x}\chi(n)\ll_\varepsilon q^\varepsilon\sqrt{x}.
\]
\end{lemma}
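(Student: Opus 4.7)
The plan is to apply Perron's formula and shift the contour to the critical line, where GLH on $|L(s,\chi)|$ converts directly to the claimed partial-sum bound.

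First I would reduce to the primitive case. Writing $\chi$ as induced by a primitive character $\chi^\ast$ of conductor $q^\ast\mid q$, we have
\[
L(s,\chi)=L(s,\chi^\ast)\prod_{\substack{p\mid q\\p\nmid q^\ast}}\bigl(1-\chi^\ast(p)p^{-s}\bigr).
\]
Since $\chi$ is non-trivial, $\chi^\ast$ is non-principal, so $L(s,\chi)$ is entire. On the line $\Re(s)=\tfrac12$ the finite product is bounded by $2^{\omega(q)}\ll_\varepsilon q^\varepsilon$, so GLH for $L(s,\chi^\ast)$ transfers to the bound $|L(\tfrac12+it,\chi)|\ll_\varepsilon(q(|t|+2))^\varepsilon$.

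Next I would invoke the truncated Perron formula with $c=1+1/\log x$ and a parameter $T\ge1$:
\[
\sum_{n\le x}\chi(n)=\frac{1}{2\pi i}\int_{c-iT}^{c+iT}L(s,\chi)\frac{x^s}{s}\,ds+O_\varepsilon\!\left(\frac{x^{1+\varepsilon}}{T}\right),
\]
and shift the contour to $\Re(s)=\tfrac12$. No poles are crossed (the pole of $1/s$ at the origin lies outside the rectangle), so the right-hand side equals the integral along the critical segment plus two horizontal pieces. Using GLH (with the trivial vertical gain from $1/s$), the critical-line contribution is
\[
\ll_\varepsilon q^\varepsilon\sqrt{x}\int_{-T}^{T}\frac{(|t|+2)^\varepsilon}{|\tfrac12+it|}\,dt\ll_\varepsilon q^\varepsilon T^\varepsilon\sqrt{x},
\]
while the horizontal segments give $O_\varepsilon((qT)^\varepsilon x^c/T)$, which is absorbed by the Perron error term. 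Taking $T=x$ makes the Perron error $O_\varepsilon(x^\varepsilon)$, and the remaining quantity is $\ll_\varepsilon q^\varepsilon x^\varepsilon\sqrt{x}$. A redefinition of $\varepsilon$ yields the stated estimate; for the degenerate range $x\le q^{2\varepsilon}$ the trivial bound $|\sum_{n\le x}\chi(n)|\le x\le q^\varepsilon\sqrt{x}$ already suffices.

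There is no serious obstacle here: because GLH is assumed, the argument is a routine unfolding of Perron's formula. The only mild care required is the treatment of non-primitive $\chi$ (which costs only a factor $q^\varepsilon$) and of the horizontal contour pieces, both of which are standard.
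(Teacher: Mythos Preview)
Your proposal is correct and follows essentially the same route as the paper: Perron's formula with $T=x$, then a contour shift to $\Re(s)=\tfrac12$ using the GLH bound $L(s,\chi)\ll_\varepsilon(q|s|)^\varepsilon$ in the half-plane $\Re(s)\ge\tfrac12$.

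One small point deserves attention. Your argument produces $q^\varepsilon x^\varepsilon\sqrt{x}$, and ``redefining $\varepsilon$'' does not by itself eliminate the factor $x^\varepsilon$ when $x$ may be arbitrarily large compared to $q$; your remark about the degenerate range $x\le q^{2\varepsilon}$ addresses only tiny $x$, not large $x$. The paper deals with this at the outset by using periodicity of $\chi$ to reduce to $x\le q$, after which $(qx)^{\varepsilon/2}\le q^\varepsilon$ immediately. Alternatively, the P\'olya--Vinogradov inequality already gives $\sum_{n\le x}\chi(n)\ll\sqrt{q}\log q\le q^\varepsilon\sqrt{x}$ whenever $x\ge q$, so either fix is trivial.
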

\begin{proof}
By periodicity we may assume without loss of generality that $x\le q$.
By Perron's formula \cite[Corollary~2.1]{Tenenbaum}, for $x\ge2$,
\[
\sum_{n\le x}\chi(n)
=\frac1{2\pi i}\int_{1+\frac1{\log{x}}-ix}^{1+\frac1{\log{x}}+ix}
L(s,\chi)x^s\,\frac{ds}{s}
+O(\log{x}).
\]
In turn, under GLH we have $L(s,\chi)\ll_\varepsilon|qs|^{\frac{\varepsilon}{2}}$
uniformly for $\Re(s)\ge\frac12$. Shifting the contour to $\Re(s)=\frac12$,
we see that
\[
\sum_{n\le x}\chi(n)\ll_\varepsilon(qx)^{\frac{\varepsilon}{2}}\sqrt{x}
\le q^\varepsilon\sqrt{x}.
\]
\end{proof}

\begin{lemma}\label{lem:generalburgess}
Let $\chi$ be a quadratic character modulo $q$,
not necessarily primitive, with conductor not dividing $m$.
For $r\in\N$, define
\[
\nu(r)=\begin{cases}
0&\text{if $r=2$ and GLH holds for Dirichlet $L$-functions},\\
\frac{r+1}{4r^2}&\text{otherwise}.
\end{cases}
\]
Then for any $r\in\N$ and $\varepsilon>0$,
\[
\sum_{n\le x}a^\flat(n)\chi(n)\ll_{m,r,\varepsilon} q^{\nu(r)+\varepsilon}x^{1-\frac1{\varphi(m)r}}.
\]
\end{lemma}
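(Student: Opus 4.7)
My plan is to combine the $\varphi(m)$-dimensional Dirichlet hyperbola method with the Burgess-type estimates of Lemmas~\ref{lem:burgess} and~\ref{lem:GLH}. The key simplification comes from the abelianness of $K=\Q(e^{2\pi i/m})$: $\zeta_K$ factors as a product of Dirichlet $L$-functions indexed by the $\varphi(m)$ characters of $(\Z/m\Z)^\times$.

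I would begin by rewriting
\[
\zeta_K(s)\prod_{\pp\in T}\bigl(1-N(\pp)^{-s}\bigr)
=\prod_{\psi\bmod m}\tilde L(s,\psi),
\]
where each $\tilde L(s,\psi)=L(s,\psi)\prod_{p\in P_\psi}(1-\psi(p)p^{-s})$ is a Dirichlet $L$-function with certain Euler factors removed. At each unramified prime $p\nmid m$ this follows from the classical identity $\prod_{\psi\bmod m}(1-\psi(p)p^{-s})=(1-p^{-f_p s})^{g_p}$, with $f_p=\operatorname{ord}_m(p)$ and $g_p=\varphi(m)/f_p$: the primes $\pp\in T$ lying over $p$ correspond to removing the Euler factor at $p$ from every $L(s,\psi)$. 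The finitely many ramified primes $p\mid m$ are handled by a direct case analysis. Expanding, $a^\flat=c_{\psi_1}*\cdots*c_{\psi_k}$ with $k=\varphi(m)$, where each $c_\psi$ is multiplicative and satisfies $|c_\psi(n)|\le 1$.

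Next, I would apply a $k$-fold Dirichlet hyperbola argument to
\[
\sum_{n\le x}a^\flat(n)\chi(n)=\sum_{n_1\cdots n_k\le x}\prod_{i=1}^k c_{\psi_i}(n_i)\chi(n_i),
\]
proceeding by induction on $k$. At each stage I would peel off a variable $n_j$ and split its range at $x^{1/j}$: in the short range apply the inductive bound on $S_{j-1}(x/n_j)$, and in the long range reverse the order of summation and apply Lemma~\ref{lem:burgess} (resp.\ Lemma~\ref{lem:GLH} when $r=2$ under GLH) to the inner sum over $n_j$. Since $\chi$ has conductor not dividing $m$, the twisted character $\psi_j\chi$ is non-principal of modulus dividing $\lcm(m,q)$, so these estimates yield an $N^{1-1/r}$ saving with a factor $q^{\nu(r)+\varepsilon}$. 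A short optimization shows that $\theta=1/j$ is the balanced split, producing exponent $1-1/(jr)$ at each stage; the base case $j=1$ is Burgess applied directly, and polylogarithmic losses from the iteration are absorbed into $x^\varepsilon$.

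The main obstacle is controlling the Euler-factor removals defining each $c_\psi$: the set $P_\psi$ typically has positive density in the primes (whenever $(\Z/m\Z)^\times$ contains elements of odd order $>1$), so a direct M\"obius inversion of the coprimality condition does not converge. This is handled via the convolution identity $c_\psi=\psi*r_\psi$ coming from $\tilde L(s,\psi)=L(s,\psi)\prod_{p\in P_\psi}(1-\psi(p)p^{-s})$, folding $r_\psi$ into the outer hyperbola sums so that the needed character cancellation lives entirely on an unrestricted sum over the Dirichlet character $\psi\chi$, to which the Burgess machinery applies cleanly.
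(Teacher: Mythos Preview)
Your high-level strategy---factor $a^\flat\chi$ as a $\varphi(m)$-fold Dirichlet convolution and induct via the hyperbola method, using Burgess (or GLH) for the innermost character sum---is exactly what the paper does. The difference lies in how you treat the $T$-removal, and this is where your proposal has a gap. You correctly observe that distributing the removed Euler factors across the individual $\tilde L(s,\psi)$ produces sets $P_\psi$ of positive density, so that $\sum_d |r_\psi(d)|\,d^{-\sigma}$ diverges for every $\sigma\le 1$. Your proposed fix, ``folding $r_\psi$ into the outer hyperbola sums,'' does not resolve this: in the inductive step you would need either a bound on $\sum_{n\le y} c_\psi(n)\chi(n)$ (which leads back to the divergent $r_\psi$-sum) or a bound on the partial sums of $c_{\psi_1}*\cdots*c_{\psi_{j-1}}*r_{\psi_j}$ (which is not what the inductive hypothesis gives you). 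The individual $r_\psi$ are simply too dense to be absorbed one at a time.

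The paper sidesteps this entirely by performing the $T$-removal \emph{before} factoring into characters: one writes $a^\flat = a*b$ with $b$ defined by $\sum b(n)n^{-s}=\prod_{\pp\in T}(1-N(\pp)^{-s})$, and observes that every $\pp\in T$ with $f(\pp)>1$ has $N(\pp)\ge p^3$, so $\sum_d|b(d)|\,d^{-\sigma}<\infty$ for all $\sigma>\tfrac13$. (In your language, this is the convolution $r_{\psi_1}*\cdots*r_{\psi_{\varphi(m)}}$; the point is that the cancellation which collapses $\prod_\psi(1-\psi(p)p^{-s})$ to $(1-p^{-f_p s})^{g_p}$ only occurs when all the $r_\psi$ are combined, and is lost if you separate them.) One then runs the hyperbola induction on the genuine character convolution $a\chi=\chi_1*\cdots*\chi_{\varphi(m)}$ with $\chi_i=\xi_i\chi$, where the base case uses that the conductor of a quadratic $\chi$ is cubefree up to a factor of $8$, so $q_2\mid 8m$ in Lemma~\ref{lem:burgess}. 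The reduction from $a^\flat$ to $a$ is then a single absolutely convergent sieve at the end.
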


\begin{proof}
We first show that it suffices to prove the corresponding estimate
with $a^\flat(n)$ replaced by $a(n)$. To that end, define coefficients
$b(n)$ such that
\[
\prod_{\pp\in T}(1-N(\pp)^{-s})=\sum_{n=1}^\infty\frac{b(n)}{n^s},
\]
and note that
\[
\sum_{n=1}^\infty\frac{|b(n)|}{n^\sigma}<\infty
\quad\text{for all }\sigma>\tfrac13.
\]
Then $a^\flat=a\ast b$, so
assuming the desired result holds for $a(n)$, we have
\begin{align*}
\sum_{n\le x}a^\flat(n)\chi(n)
&=\sum_{n\le x}\sum_{d\mid n}b(d)a(n/d)\chi(n)
=\sum_{d\le x}b(d)\chi(d)\sum_{n\le\frac{x}{d}}a(n)\chi(n)\\
&\ll_{m,r,\varepsilon} \sum_{d\le x}|b(d)\chi(d)|q^{\nu(r)+\varepsilon}
\left(\frac{x}{d}\right)^{1-\frac1{\varphi(m)r}}
\le q^{\nu(r)+\varepsilon}x^{1-\frac1{\varphi(m)r}}
\sum_{d=1}^\infty\frac{|b(d)|}{d^{1-\frac1{\varphi(m)r}}}.
\end{align*}
The final series over $d$ always converges, since
$1-\frac1{\varphi(m)r}>\frac13$ unless $m\le 2$, and in those cases
there are no primes $\pp$ with $f(\pp)>1$, so $b(d)=0$ for $d>2$.

Let $\xi_1,\ldots,\xi_{\varphi(m)}$ denote the primitive characters of
conductor dividing $m$, and put $\chi_i=\xi_i\chi$, so that
$a(n)\chi(n) = (\chi_1\ast\cdots\ast\chi_{\varphi(m)})(n)$.
We will prove by induction on $k\ge1$ that
\begin{equation}\label{eq:chik}
\sum_{n\le x}(\chi_1\ast\cdots\ast\chi_k)(n)\ll_{k,m,r,\varepsilon} q^{\nu(r)+\varepsilon}x^{1-\frac1{kr}}.
\end{equation}
The result then follows on taking $k=\varphi(m)$.

For a single character $\chi_i=\xi_i\chi$, first note that since $\chi$
is quadratic, its conductor is cubefree apart from a possible factor of
$8$. Thus, if $q_0,q_1,q_2$ are as in Lemma~\ref{lem:burgess} applied
to $\chi_i$ then $q_2\mid 8m$. Hence, by Lemmas~\ref{lem:burgess}
and \ref{lem:GLH}, we have
\[
\sum_{n\le x}\chi_i(n)\ll_{m,r,\varepsilon}
q^{\nu(r)+\varepsilon}x^{1-\frac1r},
\]
which implies the $k=1$ case of \eqref{eq:chik}.

Suppose \eqref{eq:chik} holds for some $k<\varphi(m)$, and
write
\[
f=\chi_1\ast\cdots\ast\chi_k,
\quad g=\chi_{k+1},
\quad F(x)=\sum_{n\le x}f(n),
\quad G(x)=\sum_{n\le x}g(n).
\]
Then by the Dirichlet hyperbola method, for any $y>0$, we have
\[
\sum_{n\le x}(f\ast g)(n)
=\sum_{n\le y}F\!\left(\frac{x}{n}\right)g(n)
+\sum_{n\le\frac{x}{y}}f(n)G\!\left(\frac{x}{b}\right)
-F\!\left(\frac{x}{y}\right)G(y).
\]
Taking $y=x^{\frac1{k+1}}$ and using
the estimates
\[
F(x)\ll_{k,m,r,\varepsilon} q^{\nu(r)+\varepsilon}x^{1-\frac1{kr}},
\quad G(x)\ll_{m,r,\varepsilon} q^{\nu(r)+\varepsilon}x^{1-\frac1r},
\quad |f(n)|\le d_k(n)\ll_{k,\varepsilon} n^\varepsilon,
\quad |g(n)|\le1,
\]
we obtain
\begin{equation}\label{eq:k+1}
\sum_{n\le x}(f\ast g)(n)
\ll_{k,m,r,\varepsilon} q^{\nu(r)+\varepsilon}x^{1-\frac1{(k+1)r}}
+ q^{2\nu(r)+\varepsilon}x^{1-\frac2{(k+1)r}}.
\end{equation}
In view of the trivial bound
\[
\sum_{n\le x}(f\ast g)(n)\ll_{k,\varepsilon}x^{1+\varepsilon},
\]
we may assume without loss of generality that
$x\ge q^{(k+1)r\nu(r)}$. Hence
the first term on the right-hand side of \eqref{eq:k+1}
majorises the second, so we get
\[
\sum_{n\le x}(\chi_1\ast\cdots\ast\chi_{k+1})(n)
\ll_{k,m,r,\varepsilon} q^{\nu(r)+\varepsilon}x^{1-\frac1{(k+1)r}}.
\]
This completes the proof.
\end{proof}

\begin{proposition}\label{prop:vin}
Let $\chi$ be a quadratic character modulo $q$,
not necessarily primitive, with conductor not dividing $m$.
Then there is a prime
$p\equiv1\pmod{m}$ such that $\chi(p)=-1$ and
\[
p\ll_{m,\varepsilon}
\begin{cases}
q^\varepsilon&\text{if GLH holds for Dirichlet $L$-functions},\\
q^{\frac{\varphi(m)}{4\sqrt{e}}+\varepsilon}&\text{otherwise}.
\end{cases}
\]
\end{proposition}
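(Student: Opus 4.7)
My plan is to follow the contradiction argument of Burgess for the least quadratic non-residue, as carried out in the proof of \cite[Lemma~4]{Booker}, with $a^\flat$ playing the role of the constant function $1$. Fix a parameter $Y$ and suppose for contradiction that $\chi(p) \ne -1$ for every prime $p \equiv 1 \pmod{m}$ with $p \le Y$, so that $\chi(p) \in \{0, 1\}$ on such primes. The centrepiece is a sign analysis: under this hypothesis, $a^\flat(n) \chi(n) \ge 0$ for every $Y$-smooth $n \in \N$. Indeed, $a^\flat$ is non-negative and multiplicative, and the removal of the factors at $\pp \in T$ from the Dedekind zeta function ensures that at a rational prime $p \not\equiv 1 \pmod{m}$, the support of $a^\flat$ is contained in $\{p^{f(\pp) k} : k \ge 0\}$ with $f(\pp)$ even, so $\chi(p^{f(\pp) k}) = \chi(p)^{f(\pp) k} \in \{0, 1\}$; while for $p \equiv 1 \pmod{m}$ with $p \le Y$, the hypothesis gives $\chi(p^a) \in \{0, 1\}$ directly.

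Setting $x = Y^u$ for a parameter $u \in (1, \sqrt{e})$, I would combine the sign analysis with the asymptotics $\sum_{n \le x} a^\flat(n) \sim c_K x$ (from the simple pole of $\zeta_K(s) \prod_{\pp \in T}(1 - N(\pp)^{-s})$ at $s = 1$) and the Hildebrand-type smooth-sum asymptotic $\sum_{n \le x,\, P^+(n) \le Y} a^\flat(n) \sim c_K x \rho(u)$, where $\rho$ is the classical Dickman function. The latter applies because $\sum_{p \le y} a^\flat(p) \log p \sim y$ by Dirichlet's theorem on primes $\equiv 1 \pmod{m}$. Together these yield
\[
\sum_{n \le x} a^\flat(n) \chi(n) \ge (2\rho(u) - 1 + o(1)) c_K x - O_m(x q^{-\eta}),
\]
the last term absorbing the cost of the coprimality condition $\gcd(n, q) = 1$; since $2\rho(u) - 1 > 0$ for $u < \sqrt{e}$, this is $\gg_m x$ for $q$ sufficiently large. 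Comparing with the upper bound of Lemma~\ref{lem:generalburgess} forces $x^{1/(\varphi(m) r)} \ll q^{\nu(r) + \varepsilon}$, i.e., $Y = x^{1/u} \ll q^{\varphi(m) r \nu(r)/u + \varepsilon}$. Under GLH, take $r = 2$ so that $\nu(2) = 0$ and obtain $Y \ll q^\varepsilon$. Unconditionally, let $r \to \infty$ so that $r \nu(r) = (r+1)/(4r) \to 1/4$, and $u \to \sqrt{e}$ (where $\rho(u) = 1 - \log u = 1/2$), yielding $Y \ll q^{\varphi(m)/(4\sqrt{e}) + \varepsilon}$.

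The main obstacle is justifying the Hildebrand-type asymptotic for $a^\flat$-weighted smooth sums, uniformly in $q$ and after imposing the coprimality condition. This is standard for $m = 1$ (where $a^\flat = 1$), but for general $m$ one must either invoke results on smooth sums for multiplicative functions in the Selberg--Delange class, or bypass the precise asymptotic using a Rankin-style lower bound on the truncated Euler product, which still admits the optimization $u = \sqrt{e}$ needed to recover the advertised exponent.
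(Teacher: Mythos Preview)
Your plan is the paper's own: the Vinogradov trick, lower-bounding $\sum_{n\le x}a^\flat(n)\chi(n)$ via the sign observation on smooth $n$ and comparing with Lemma~\ref{lem:generalburgess}, then optimising $u\to\sqrt e$ and (unconditionally) $r\to\infty$. Two points on the execution.

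First, the paper sidesteps your ``main obstacle'' entirely. Since the optimal $u=\sqrt e$ lies in $(1,2)$, no general Hildebrand asymptotic is needed: for $x<y^2$ a non-$y$-smooth $n\le x$ has exactly one prime factor $p>y$, so the paper writes
\[
\sum_{\substack{n\le x\\(n,q)=1}}a^\flat(n)(1-\chi(n))
\le 2\sum_{y<p\le x}a(p)\sum_{\substack{n\le x/p\\(n,q)=1}}a^\flat(n)
\]
and evaluates the right side directly from the prime number theorem in arithmetic progressions together with Weber's asymptotic $\sum_{n\le x}a^\flat(n)=\kappa^\flat x+O(x^\theta)$. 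This recovers precisely $\rho(u)=1-\log u$ with explicit, uniform error terms, so neither a Selberg--Delange argument nor a Rankin bound is required.

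Second, your coprimality correction is wrong as stated: sieving out $(n,q)>1$ does not cost an additive $O_m(xq^{-\eta})$ but rather the multiplicative factor $A(q)=\prod_{\pp\mid q\OK,\,\pp\notin T}(1-N(\pp)^{-1})$, which can be as small as $(\log\log q)^{-\varphi(m)}$. The correct lower bound is $\gg_m A(q)x$, not $\gg_m x$. This is harmless to the conclusion---$A(q)\gg_\varepsilon q^{-\varepsilon}$ is absorbed in the final power comparison and the exponent $\varphi(m)/(4\sqrt e)$ survives---but the displayed inequality is false for highly composite $q$.
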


\begin{proof}
Note that if $a^\flat(n)\ne0$ then for every prime $p\mid n$ we have either
$p\equiv1\pmod{m}$ or $2\mid v_p(n)$. Hence if $a^\flat(n)\chi(n)<0$ then
there exists $p\mid n$ such that $p\equiv1\pmod{m}$ and $\chi(p)=-1$.

Fix a small $\delta>0$, and suppose there is no such prime $\le y=q^{\delta+\alpha e^{\delta/2}}$, where
\[
\alpha\coloneq\begin{cases}
0&\text{if GLH holds for Dirichlet $L$-functions},\\
\frac{\varphi(m)}{4\sqrt{e}}&\text{otherwise}.
\end{cases}
\]
Then $a^\flat(n)(1-\chi(n))=0$
whenever $n$ is coprime to $q$ and $y$-smooth.
Considering $x\in\bigl[y^{1+\delta},y^2\bigr]$, we have
\begin{equation}\label{eq:vin}
\begin{aligned}
\sum_{n\le x}a^\flat(n)\chi(n)
&=\sum_{\substack{n\le x\\(n,q)=1}}a^\flat(n)
-\sum_{\substack{n\le x\\(n,q)=1}}a^\flat(n)(1-\chi(n))\\
&=\sum_{\substack{n\le x\\(n,q)=1}}a^\flat(n)
-\sum_{\substack{y<p\le x\\p\nmid q}}a^\flat(p)
\sum_{\substack{n\le x/p\\(n,q)=1}} a^\flat(n)(1-\chi(pn))\\
&\ge\sum_{\substack{n\le x\\(n,q)=1}}a^\flat(n)
-2\sum_{y<p\le x}a(p)
\sum_{\substack{n\le x/p\\(n,q)=1}} a^\flat(n).
\end{aligned}
\end{equation}

By a classical result going back to Weber \cite[\S194]{Weber}, we have
\[
\sum_{n\le x}a(n)=\kappa x+O\bigl(x^\theta\bigr)
\quad\text{for all }x>0,
\]
where $\kappa>0$ and $\theta<1$ depend only on $m$.
We may assume without loss of generality that $\theta>\frac13$. Then
applying a sieve as in the proof of Lemma~\ref{lem:generalburgess},
we derive
\begin{equation}\label{eq:aflatasymptotic}
\sum_{n\le x}a^\flat(n)=\kappa^\flat x+O\bigl(x^\theta\bigr)
\quad\text{for all }x>0,
\end{equation}
where $\kappa^\flat=\kappa\prod_{\pp\in T}(1-N(\pp)^{-1})>0$.

Similarly, to sieve out primes dividing $q$, let $b_q(n)$ be the coefficients
satisfying
\[
\prod_{\substack{\pp\mid q\OK\\\pp\notin T}}(1-N(\pp)^{-s})
=\sum_{n=1}^\infty\frac{b_q(n)}{n^s}.
\]
Then
\begin{align*}
\sum_{\substack{n\le x\\(n,q)=1}}a^\flat(n)
&=\sum_{n\le x}\sum_{d\mid n}b_q(d) a^\flat(n/d)
=\sum_{d=1}^\infty b_q(d)\sum_{n\le x/d}a^\flat(n)
=\sum_{d=1}^\infty b_q(d)\left(\frac{\kappa^\flat x}{d}
+O\bigl((x/d)^\theta\bigr)\right)\\
&=\kappa^\flat A(q)x +O\bigl(B(q)x^\theta\bigr),
\end{align*}
where
\[
A(q)=\sum_{d=1}^\infty\frac{b_q(d)}{d}
=\prod_{\substack{\pp\mid q\OK\\\pp\notin T}}(1-N(\pp)^{-1})
\ge\left(\frac{\varphi(q)}{q}\right)^{\varphi(m)}
\gg_m(\log\log{q})^{-\varphi(m)}
\]
and
\[
B(q)=\sum_{d=1}^\infty\frac{|b_q(d)|}{d^\theta}
=\prod_{\substack{\pp\mid q\OK\\\pp\notin T}}\bigl(1+N(\pp)^{-\theta}\bigr)
\ll_{m,\varepsilon} q^\varepsilon.
\]

Returning to \eqref{eq:vin}, we split the $p$ sum
over the ranges $y<p\le z$ and $z<p\le x$, where
$z=(\frac{A(q)}{B(q)})^{\frac1{1-\theta}}x\gg_\varepsilon x^{1-\varepsilon}$.
(Note that $z>y$ when $q$ is sufficiently large.)
For the small primes, we have
\begin{align*}
2\sum_{y<p\le z}a(p)\sum_{\substack{n\le x/p\\(n,q)=1}}a^\flat(n)
&=2\sum_{y<p\le z}a(p)\left(\frac{\kappa^\flat A(q)x}{p}+
O\bigl(B(q)(x/p)^\theta\bigr)\right)\\
&=2\kappa^\flat A(q)x\sum_{y<p\le z}\frac{a(p)}{p}
+O\!\left(B(q)x^\theta\sum_{y<p\le z}|a(p)|p^{-\theta}\right).
\end{align*}
Recall that $a(p)=\sum_\xi\xi(p)$,
where $\xi$ ranges over the primitive characters of conductor dividing $m$.
By the prime number theorem in arithmetic progressions, we have
\[
\sum_{y<p\le z}\frac{a(p)}{p}=\log\!\left(\frac{\log{z}}{\log{y}}\right)
+O_m\bigl((\log{y})^{-1}\bigr),
\]
and using that $|a(p)|\le\varphi(m)$, we have
\[
\frac{B(q)}{A(q)}\sum_{y<p\le z}|a(p)|(x/p)^\theta
\ll_m\frac{B(q)}{A(q)}x^\theta\frac{z^{1-\theta}}{\log{x}}
=\frac{x}{\log{x}}.
\]
Thus the small prime sum is
\[
2\kappa^\flat A(q)x\left(\log\!\left(\frac{\log{z}}{\log{y}}\right)
+O_m\bigl((\log{x})^{-1}\bigr)\right).
\]

For the large primes we change order of summation:
\begin{align*}
2\sum_{z<p\le x}a(p)&\sum_{\substack{n\le x/p\\(n,q)=1}}a^\flat(n)
\le 2\sum_{\substack{n\le x/z\\(n,q)=1}}a^\flat(n)\sum_{p\le x/n}a(p).
\end{align*}
Applying the prime number theorem in arithmetic progressions, the inner sum is
\[
\sum_{p\le x/n}a(p)=\frac{x}{n\log{x}}\left(1
+O_m\!\left(\frac{1+\log(x/z)}{\log{x}}\right)\right),
\]
so we get
\begin{align*}
\frac{2x}{\log{x}}\left(1+O\!\left(\frac{1+\log(x/z)}{\log{x}}\right)\right)
\sum_{\substack{n\le x/z\\(n,q)=1}}\frac{a^\flat(n)}{n}.
\end{align*}
Now, applying partial summation to \eqref{eq:aflatasymptotic}, we have
\[
\sum_{n\le x}\frac{a^\flat(n)}{n}
=\kappa^\flat\log{x}+c+O\bigl(x^{\theta-1}\bigr)
\quad\text{for all }x>0,
\]
for a certain constant $c$ depending only on $m$, and hence
\begin{align*}
\sum_{\substack{n\le x\\(n,q)=1}}\frac{a^\flat(n)}{n}
&=\sum_{d=1}^\infty\frac{b_q(d)}{d}
\sum_{n\le x/d}\frac{a^\flat(n)}{n}
=\sum_{d=1}^\infty\frac{b_q(d)}{d}
\bigl(\kappa^\flat\log(x/d)+c+O\bigl((x/d)^{\theta-1}\bigr)\bigr)\\
&=A(q)(\kappa^\flat\log{x}+c)
-\kappa^\flat\sum_{d=1}^\infty\frac{b_q(d)\log{d}}{d}
+O\bigl(B(q)x^{\theta-1}\bigr)\\
&=A(q)\left(\kappa^\flat\log{x}+c
+\kappa^\flat\sum_{\substack{\pp\mid q\OK\\\pp\notin T}}\frac{\log{N(\pp)}}{N(\pp)-1}\right)
+O\bigl(B(q)x^{\theta-1}\bigr)\\
&=\kappa^\flat A(q)\bigl(\log{x}+O(\log\log{q})\bigr)
+O\bigl(B(q)x^{\theta-1}\bigr).
\end{align*}
Replacing $x$ by $x/z=(B(q)/A(q))^{\frac1{1-\theta}}$, we see that
the large prime sum is at most
\begin{align*}
&2\kappa^\flat A(q)\frac{x}{\log{x}}
\left(1+O\!\left(\frac{1+\log(x/z)}{\log{x}}\right)\right)
(\log(x/z) + O(\log\log{q}))\\
&=2\kappa^\flat A(q)x
\left(\frac{\log(x/z)}{\log{x}}
+O\!\left(\frac{\log\log{q}}{\log{q}}\right)\right).
\end{align*}

Combining with the small prime sum, we get
\begin{align*}
&\le 2\kappa^\flat A(q)x\left(\log\!\left(\frac{\log{x}}{\log{y}}\right)
+\log\!\left(\frac{\log{z}}{\log{x}}\right)
+\frac{\log(x/z)}{\log{x}}
+O\!\left(\frac{\log\log{q}}{\log{q}}\right)\right)\\
&\le 2\kappa^\flat A(q)x\left(\log\!\left(\frac{\log{x}}{\log{y}}\right)
+O\!\left(\frac{\log\log{q}}{\log{q}}\right)\right),
\end{align*}
and so altogether we have
\[
\sum_{n\le x}a^\flat(n)\chi(n)
\ge \kappa^\flat A(q) x
\left(1-2\log\!\left(\frac{\log{x}}{\log{y}}\right)
+O\!\left(\frac{\log\log{q}}{\log{q}}\right)\right).
\]
Now set $x=y^{e^{\frac12(1-\delta)}}$
. Then
\[
1-2\log\!\left(\frac{\log{x}}{\log{y}}\right)
+O\!\left(\frac{\log\log{q}}{\log{q}}\right)
=\delta+O\!\left(\frac{\log\log{q}}{\log{q}}\right).
\]
This is at least $\delta/2$ for sufficiently large $q$, so applying
Lemma~\ref{lem:generalburgess} with $r=2$ when GLH holds
and $r>\frac{\varphi(m)}{4\delta\exp((1-\delta)/2)}$ otherwise,
we have
\begin{align*}
(\log\log{q})^{-\varphi(m)}&\ll
A(q)\ll_\delta\frac1x\left|\sum_{n\le x}a^\flat(n)\chi(n)\right|
\ll_{r,\varepsilon} q^{\nu(r)+\varepsilon}
x^{-\frac1{\varphi(m)r}}\\
&=q^{\varepsilon-\frac{\delta\exp((1-\delta)/2)}{\varphi(m)r}}
\begin{cases}
1&\text{if GLH holds},\\
q^{\frac1{4r^2}}&\text{otherwise}.
\end{cases}
\end{align*}
Choosing $\varepsilon$ sufficiently small,
we obtain a contradiction for sufficiently large $q$.

Hence for $q\gg_{m,\delta}1$ there exists a prime
$p\le y=q^{\delta+\alpha e^{\delta/2}}$ satisfying
$p\equiv1\pmod{m}$ and $\chi(p)=-1$.
By Dirichlet's theorem, there is always a suitable
$p\ll_q 1$, so we can choose an implied constant to cover
the small values of $q$. Since $\delta$ was arbitrary, we thus have
$p\ll_{m,\varepsilon}q^{\alpha+\varepsilon}$.
\end{proof}

\begin{theorem}\label{thm:quadchars}
For $i=1,\ldots,r$, let $\chi_i\pmod{q_i}$ be a quadratic character,
and let $\epsilon_i\in\{\pm1\}$.
Assume that no non-empty product of $\chi_i$s has conductor
dividing $m$.
Then there is a squarefree positive integer $n$ with at most
$r$ prime factors $p_j$, each satisfying
\[
p_j\equiv1\!\!\!\!\pmod{m}\quad\text{and}\quad
p_j\ll_{m,\varepsilon}\begin{cases}
(q_1\cdots q_r)^\varepsilon&\text{if GLH holds for Dirichlet $L$-functions},\\
(q_1\cdots q_r)^{\frac{\varphi(m)}{4\sqrt{e}}+\varepsilon}&\text{otherwise},
\end{cases}
\]
such that $\chi_i(n)=\epsilon_i$ for all $i=1,\ldots,r$.
\end{theorem}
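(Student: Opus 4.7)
The plan is to prove Theorem~\ref{thm:quadchars} via an iterative application of Proposition~\ref{prop:vin}, in the style of \cite[Lemma~4]{Booker}, combined with linear algebra over $\F_2$. Identify $\{\pm1\}$ additively with $\F_2$ via $\epsilon\mapsto(1-\epsilon)/2$, and to each prime $p$ associate the vector $v(p)=\bigl(\tfrac{1-\chi_1(p)}{2},\ldots,\tfrac{1-\chi_r(p)}{2}\bigr)\in\F_2^r$. Finding a squarefree $n=p_1\cdots p_k$ with $\chi_i(n)=\epsilon_i$ for all $i$ is then equivalent to expressing the target vector encoded by $(\epsilon_1,\ldots,\epsilon_r)$ as a sum $v(p_1)+\cdots+v(p_k)$ in $\F_2^r$. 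It therefore suffices to produce $r$ distinct primes $p_1,\ldots,p_r\equiv1\pmod{m}$, each of the claimed size, whose vectors form a basis of $\F_2^r$: any target is then realised as a subset sum, and $k\le r$ is automatic.

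I would construct the $p_j$ inductively on $j$. Suppose $p_1,\ldots,p_{j-1}$ have been chosen with $v_1,\ldots,v_{j-1}$ linearly independent, and let $V_{j-1}$ denote their $(j-1)$-dimensional span. Pick any non-zero linear functional $\ell\in\F_2^r$ annihilating $V_{j-1}$; such an $\ell$ corresponds to a non-empty subset $S_j\subseteq\{1,\ldots,r\}$, and for every prime $p$ one has $\ell(v(p))=(1-\chi_{S_j}(p))/2$, where $\chi_{S_j}\coloneq\prod_{i\in S_j}\chi_i$. By the standing hypothesis, $\chi_{S_j}$ has conductor not dividing $m$, and its modulus divides $q_1\cdots q_r$, so Proposition~\ref{prop:vin} supplies a prime $p_j\equiv1\pmod{m}$ with $\chi_{S_j}(p_j)=-1$ and $p_j\ll_{m,\varepsilon}(q_1\cdots q_r)^{\nu+\varepsilon}$ for the appropriate $\nu\in\{0,\varphi(m)/(4\sqrt{e})\}$. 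The former condition translates to $\ell(v_j)=1$, so $v_j\notin V_{j-1}$ and the induction continues.

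After $r$ steps, $v_1,\ldots,v_r$ form a basis of $\F_2^r$, so the target vector admits a unique expression $\sum_{j\in J}v_j$ with $J\subseteq\{1,\ldots,r\}$. Setting $n=\prod_{j\in J}p_j$ then satisfies $\chi_i(n)=\epsilon_i$ for all $i$, while the linear independence of the $v_j$ forces the $p_j$ to be pairwise distinct, making $n$ squarefree with at most $r$ prime factors. The only genuinely hard input is Proposition~\ref{prop:vin}; the argument above is essentially a bookkeeping exercise, made possible by the linear-independence hypothesis on the $\chi_i$, which guarantees that every product $\chi_{S_j}$ encountered along the way is admissible for the proposition.
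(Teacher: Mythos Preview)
Your argument is correct and is precisely the approach the paper intends: it reproduces the proof of \cite[Lemma~4]{Booker} with Proposition~\ref{prop:vin} substituted for \cite[Lemma~3]{Booker}, exactly as the paper indicates. One small point worth making explicit is that each product $\chi_{S_j}$ should be regarded as a (possibly imprimitive) character modulo $q_1\cdots q_r$ before invoking Proposition~\ref{prop:vin}, so that $\chi_{S_j}(p_j)=-1$ forces $p_j\nmid q_1\cdots q_r$ and the vectors $v(p_j)\in\F_2^r$ are well defined.
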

\begin{proof}
This generalises \cite[Lemma~4]{Booker}, and the proof is identical,
substituting Proposition~\ref{prop:vin} in place of \cite[Lemma~3]{Booker}.
\end{proof}

\section{Conditional analytic estimates}\label{sec:ERH}
Next we derive an analogue of Theorem~\ref{thm:quadchars}
for higher-order characters that will be useful
in the proof of Theorem~\ref{thm:smallm}.
\begin{lemma}\label{lem:Kummer}
Let $\ell$ be an odd prime number, and let $q>1$ be an integer coprime
to $\ell$ and not divisible by the $\ell$th power of any prime. Assuming ERH
for the Dedekind zeta function of the Kummer extension
$\Q(\sqrt[\ell]{q},e^{2\pi i/\ell})$, there exists a prime
$P\ll\ell^4\log^2\left(\ell\prod_{p\mid q}p\right)$
such that $q$ is not an $\ell$th power modulo $P$.
\end{lemma}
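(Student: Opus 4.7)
The plan is to realize ``$q$ is not an $\ell$th power modulo $P$'' as a Chebotarev condition in the Kummer extension $L = \Q(\sqrt[\ell]{q},\zeta_\ell)$, and then to invoke an effective form of the Chebotarev density theorem under ERH. Since $q>1$ is $\ell$-power-free, it is not a perfect $\ell$th power in $\Q$, so $x^\ell - q$ is irreducible and $[\Q(\sqrt[\ell]{q}):\Q] = \ell$; combined with $(\ell,\ell-1)=1$, this gives $[L:\Q] = \ell(\ell-1)$ and $G := \Gal(L/\Q) \cong \Z/\ell \rtimes (\Z/\ell)^\times$. Writing $H := \Gal(L/\Q(\zeta_\ell))$, a direct computation in $G$ shows that $H \setminus \{1\}$ is a single conjugacy class $C$ of size $\ell-1$. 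For a rational prime $P$ unramified in $L$, elementary Kummer theory shows that if $P \not\equiv 1 \pmod \ell$ then $q$ is automatically an $\ell$th power mod $P$, while if $P \equiv 1 \pmod \ell$ then $\sigma_P \in H$ and $q$ is an $\ell$th power mod $P$ iff $\sigma_P$ fixes $\sqrt[\ell]{q}$, iff $\sigma_P = 1$. Hence the desired condition is exactly $\sigma_P \in C$.

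Next I would bound $|d_L|$ via the tower formula
\[
|d_L| = |N_{K/\Q}(\mathfrak{d}_{L/K})| \cdot |d_K|^\ell, \qquad K := \Q(\zeta_\ell),
\]
using $|d_K| = \ell^{\ell-2}$. The relative discriminant $\mathfrak{d}_{L/K}$ is supported on primes of $K$ above $\ell$ and above the primes $p \mid q$. For each $p \mid q$, $p$ is unramified in $K$, and the primes of $K$ above $p$ that ramify in $L/K$ are tame of degree $\ell$, contributing a total of at most $(\ell-1)^2 \log p$ to $\log|d_L|$. At the unique prime $\pp_\ell$ of $K$ above $\ell$ (which has $e=\ell-1$, $f=1$ over $\Q$), the extension $L/K$ is either unramified or totally wildly ramified of degree $\ell$; in the latter case the standard bound on the different exponent gives $d \le e_{L/K}-1 + e_{L/K}\,v_{\pp_\ell}(e_{L/K}) = \ell^2 - 1$, yielding a wild contribution of $O(\ell^2\log\ell)$ to $\log|d_L|$. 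Altogether,
\[
\log|d_L| \;\ll\; \ell^2 \log\!\Bigl(\ell \prod_{p \mid q} p\Bigr).
\]

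Finally, I would apply the Lagarias--Odlyzko / Bach--Sorenson form of effective Chebotarev under ERH for $\zeta_L$: the least rational prime $P$ unramified in $L$ with $\sigma_P \in C$ satisfies
\[
P \;\le\; \bigl(c_1 \log|d_L| + c_2\,[L:\Q] + c_3\bigr)^2 \;\ll\; \ell^4 \log^2\!\Bigl(\ell \prod_{p \mid q} p\Bigr).
\]
Such a $P$ is automatically coprime to $\ell q$, and by the Frobenius analysis $q$ is not an $\ell$th power modulo $P$, which is the required conclusion. The main technical step is the discriminant estimate: a naive treatment of the wild prime $\pp_\ell$ would inflate $\log|d_L|$ by an extra factor of $\ell^2$ and thus spoil the $\ell^4$ exponent, so one must exploit both that there is a unique prime of $K$ above $\ell$ (with $f=1$) and that its ramification index in $L/K$ is at most $\ell$.
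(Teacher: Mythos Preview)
Your argument is correct and is essentially the paper's own proof: both translate ``$q$ is not an $\ell$th power mod $P$'' into a Frobenius/Artin-symbol condition in $L=\Q(\zeta_\ell,\sqrt[\ell]{q})$, bound $\log|\Delta_L|=O\bigl(\ell^2\log(\ell\prod_{p\mid q}p)\bigr)$, and apply the Bach--Sorenson effective Chebotarev theorem under ERH. The paper phrases this relative to $K=\Q(\zeta_\ell)$ (seeking a degree-$1$ prime of $K$ with non-trivial Artin symbol in $\Gal(L/K)$) rather than over $\Q$, and it asserts an exact formula $\Delta_L=\ell^{\ell(\ell-2)}\prod_{p\mid q}p^{(\ell-1)^2}$ that in fact neglects the wild contribution at $\pp_\ell$ (e.g.\ for $\ell=3$, $q=2$ one has $|\Delta_L|=2^4\cdot3^7$, not $2^4\cdot3^3$); your upper bound $v(\mathfrak{d}_{L/K})\le\ell^2-1$ at $\pp_\ell$ is therefore the more careful treatment, though either way the final $\ell^4\log^2$ bound is unaffected.
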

\begin{proof}
Let $K=\Q(e^{2\pi i/\ell})$ and $L=K(\sqrt[\ell]{q})$.
By \cite[Lemma~5]{Booker}, $L/K$ is cyclic of degree $\ell$,
and a rational prime $P\nmid\ell q$ splits completely in $L$ if and only if
$P\equiv1\pmod{\ell}$ and $q$ is an $\ell$th power modulo $P$.

Hence it suffices to find $P\nmid q$ with $P\equiv1\pmod{\ell}$ such that
$P$ does not split completely in $L$. Equivalently, we seek an unramified
degree $1$ prime $\pp$ of $K$ such that the Artin symbol
$\left(\frac{L/K}{\pp}\right)$ is non-trivial.
By \cite[Theorem~3.1(2)]{Bach-Sorenson}, there is such a $\pp$ satisfying
\[
N_{K/\Q}\pp\le(1+o(1))\log^2\Delta_L,
\]
where $\Delta_L$ is the absolute discriminant of $L/\Q$, and
$o(1)$ is a quantity bounded by an absolute constant and
tending to $0$ as $\Delta_L\to\infty$.

Since $\ell\nmid q$ and $q$ is free of $\ell$th powers, local Kummer theory shows
that $L$ has absolute discriminant
\[
\Delta_L=\Delta_K^{[L:K]}N_{K/\Q}(\Disc(L/K))
=\ell^{\ell(\ell-2)}\prod_{p\mid q}p^{(\ell-1)^2},
\]
and thus we can take
\[
P = N_{K/\Q}\pp\ll\log^2\Delta_L\ll\ell^4\log^2\left(\ell\prod_{p\mid q}p\right).
\]
\end{proof}

\begin{theorem}\label{thm:orderl}
Let $\ell$ be an odd prime number, let $q_1,\ldots,q_r$ be distinct
primes different from $\ell$, and let $\epsilon_1,\ldots,\epsilon_r\in\C^\times$ be arbitrary $\ell$th roots of unity. Then, assuming ERH,
there exists a primitive character $\chi$ of squarefree conductor $d$
with the following properties:
\begin{itemize}
\item $\chi$ has order dividing $\ell$;
\item every prime $p\mid d$ satisfies $p\ll\ell^4\log^2(\ell q_1\cdots q_r)$;
\item $\chi(q_j)=\epsilon_j$ for $j=1,\ldots,r$.
\end{itemize}
\end{theorem}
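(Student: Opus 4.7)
The plan is to build $\chi$ as a product of order-$\ell$ characters attached to primes produced iteratively by Lemma~\ref{lem:Kummer}, with the exponents fixed by solving a linear system over $\F_\ell$.

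Fix a primitive $\ell$th root of unity $\zeta$, and for each prime $P\equiv 1\pmod{\ell}$ fix a nontrivial character $\psi_P$ of order $\ell$ modulo $P$. I would seek $\chi$ of the form $\chi = \prod_{i=1}^{s}\psi_{P_i}^{a_i}$ with distinct primes $P_i$ and $a_i\in\F_\ell$; such a $\chi$ automatically has order dividing $\ell$ and squarefree conductor $d = \prod_{a_i\ne 0}P_i$. Taking discrete logarithms base $\zeta$, the required identities $\chi(q_j)=\epsilon_j$ become a linear system
\[
\sum_{i=1}^s a_i \log_\zeta\psi_{P_i}(q_j) \equiv \log_\zeta\epsilon_j \pmod{\ell} \qquad (j=1,\ldots,r),
\]
i.e.\ $a^T M \equiv w^T$, where $M$ is the $s\times r$ matrix with entries $\log_\zeta\psi_{P_i}(q_j)$ and $w = (\log_\zeta\epsilon_j)_j$. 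To guarantee solvability for every right-hand side, I will choose the $P_i$ so that $M$ has column rank $r$.

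I would produce the primes inductively as follows. Given $P_1,\ldots,P_s$ whose matrix $M_s$ has rank $< r$, its column kernel contains a nonzero $c = (c_1,\ldots,c_r)\in\F_\ell^r$; setting $Q = \prod_j q_j^{c_j}$ then gives $\psi_{P_i}(Q)=1$ for every $i$, i.e.\ $Q$ is an $\ell$th power modulo every previous $P_i$. After replacing $Q$ by its $\ell$th-power-free part---still $>1$ since $c\ne 0$ and the $q_j$ are distinct primes coprime to $\ell$---Lemma~\ref{lem:Kummer} furnishes a prime $P_{s+1}\ll \ell^{4}\log^{2}(\ell q_1\cdots q_r)$ such that $Q$ is not an $\ell$th power modulo $P_{s+1}$. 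This $P_{s+1}$ automatically satisfies $P_{s+1}\nmid Q$ (else $Q\equiv 0$ would be a trivial $\ell$th power), so it is distinct from all prior $P_i$ and from every $q_j$ with $c_j\ne 0$; and $P_{s+1}\equiv 1\pmod{\ell}$ is forced, because modulo any prime $P\not\equiv 1\pmod{\ell}$ the $\ell$th-power map on units is a bijection. The new row of $M_{s+1}$ then has nonzero pairing with $c$, so the rank strictly increases. After at most $r$ iterations the rank is $r$, the system is solvable, and the resulting $\chi$ has all the required properties.

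The main obstacle I foresee is ruling out the remaining case $P_{s+1}=q_j$ for an index $j$ with $c_j\equiv 0\pmod{\ell}$; if such a $P_{s+1}$ ended up with $a_{s+1}\ne 0$ in the final solution, then $\chi(q_j)=0\ne\epsilon_j$. This requires a mild strengthening of Lemma~\ref{lem:Kummer}: one needs an unramified degree-one prime of $\Q(e^{2\pi i/\ell})$ with nontrivial Artin symbol in $\Gal(\Q(\sqrt[\ell]{Q},e^{2\pi i/\ell})/\Q(e^{2\pi i/\ell}))$ whose rational norm lies outside the finite set $\{q_1,\ldots,q_r\}$. Under ERH this follows from a standard application of effective Chebotarev in the style of \cite[Theorem~3.1]{Bach-Sorenson}, applied to the complement of the excluded set, and affects only the implicit constant in the bound on $P_{s+1}$.
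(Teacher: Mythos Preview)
Your approach is essentially identical to the paper's: both reduce to a linear-algebra problem over $\F_\ell$ and invoke Lemma~\ref{lem:Kummer} to supply primes $P\equiv1\pmod\ell$ at which a suitable product of the $q_j$ fails to be an $\ell$th power; the paper simply takes all such primes $p\le x$ with $p\nmid q_1\cdots q_r$ at once and argues by contradiction that the associated vectors span $\F_\ell^r$, rather than building up the rank one step at a time. The obstacle you flag---that the prime produced by Lemma~\ref{lem:Kummer} might coincide with some $q_j$ having $c_j=0$---is in fact glossed over in the paper's own proof as well, and your suggested fix via effective Chebotarev avoiding a prescribed finite set is correct and standard.
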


\begin{proof}
Fix a primitive $\ell$th root of unity $\zeta\in\C^\times$ and
a large real number $x$. Let $\{p_1,\ldots,p_n\}$
be the set of primes $p\le x$ satisfying $p\equiv1\pmod{\ell}$ and
$p\nmid q_1\cdots q_r$. For each $i=1,\ldots,n$, let $\chi_i$ be a
character mod $p_i$ of order $\ell$, and let
$w_i=(w_{i1},\ldots,w_{ir})\in\F_\ell^r$
be the vector such that $\chi_i(q_j)=\zeta^{w_{ij}}$.
Suppose that the $w_i$ span a proper subspace of $\F_\ell^r$.
Then there is a non-zero vector $v\in\F_\ell^r$ such that
$v\cdot w_i=0$ for every $i$.
Writing $v=(v_1,\ldots,v_r)\bmod\ell$, where $v_j\in[0,\ell)\cap\Z$,
and $q=q_1^{v_1}\cdots q_r^{v_r}$,
this implies that $\chi_i(q)=1$ for every $i$,
which in turn implies that $q$ is an $\ell$th power
modulo $p_i$. Since $v$ is non-zero, $q$ satisfies the hypotheses
of Lemma~\ref{lem:Kummer}, and this results in a contradiction
when $x\gg\ell^4\log^2(\ell q_1\cdots q_r)$.

Therefore the $w_i$ span $\F_\ell^r$, so there are indices
$i_1,\ldots,i_r\le n$ such that $\{w_{i_1},\ldots,w_{i_r}\}$ is
a basis. Let $b=(b_1,\ldots,b_r)\in\F_\ell^r$ be the vector such that
$\epsilon_j=\zeta^{b_j}$, and write $b=a_1w_{i_1}+\cdots+a_rw_{i_r}$.
Then the character
\[
\chi=\prod_{\substack{1\le k\le r\\a_k\ne0}}\chi_{i_k}^{a_k}
\]
fulfills the requirements of the lemma.
\end{proof}

\section{Results on Diophantine equations}\label{sec:diophantine}
\begin{lemma}\label{lem:siegel}
Let $f\in\Z[x]$ be a squarefree polynomial of degree at least $3$.
Then $f(n)$ is a square for at most finitely many $n\in\Z$.
\end{lemma}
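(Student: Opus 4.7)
The plan is to reduce the statement to Siegel's theorem on integral points on curves, applied to the affine curve $C:y^2=f(x)$. If $f(n)=s^2$ for some $n,s\in\Z$, then $(n,s)\in C(\Z)$, so it suffices to show that $C$ has only finitely many integral points. The hypotheses on $f$ are precisely what is needed to ensure that $C$ has a smooth projective model of positive genus.

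Concretely, I would first verify that $C$ is a smooth affine curve over $\Q$. The partial derivatives of $y^2-f(x)$ vanish simultaneously only when $y=0$ and $f'(x)=0$; combined with the curve equation, this forces $x$ to be a double root of $f$, which is excluded by the squarefree hypothesis. Next, the smooth projective completion of $C$ is a (hyper)elliptic curve of genus $g=\lfloor(\deg f-1)/2\rfloor$, which is at least $1$ because $\deg f\ge3$. Thus Siegel's theorem on integral points applies and guarantees that $|C(\Z)|<\infty$. Since the map $n\mapsto(n,\sqrt{f(n)})$ from $\{n\in\Z:f(n)\text{ is a square}\}$ into $C(\Z)$ is injective (negative values of $f(n)$ are excluded automatically, as they are not squares), we conclude that there are only finitely many $n$ with $f(n)$ square.

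There is no real obstacle here: the lemma is essentially a direct invocation of Siegel's theorem, and the only content of the proof is the routine verification of smoothness and the genus computation. The one item worth a brief comment is that in the degree $3$ and $4$ cases one may cite Siegel's theorem in its classical form for affine models of elliptic curves, while for $\deg f\ge5$ one needs the extension to hyperelliptic curves (equivalently, to smooth affine curves of genus $\ge1$), both of which are completely standard.
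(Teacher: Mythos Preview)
Your proposal is correct and takes essentially the same approach as the paper: the paper's proof is a one-line citation of Siegel's theorem on integral points, and your write-up simply spells out the standard details (smoothness of $y^2=f(x)$ from squarefreeness, genus $\lfloor(\deg f-1)/2\rfloor\ge1$ from $\deg f\ge3$) that make Siegel applicable.
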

\begin{proof}
This follows from Siegel's theorem on integral points \cite{Siegel}.
\end{proof}
We also provide a simple proof of this result for polynomials of even degree
with square leading coefficient:
\begin{lemma}\label{lem:squarevalues}
Suppose $f\in\Z[x]$ has even degree and leading coefficient $a^2$
for some $a\in\Z$. Then either $f=g^2$ for some $g\in\Z[x]$ or
$f(n)$ is a square for at most finitely many $n\in\Z$.
\end{lemma}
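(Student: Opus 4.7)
The plan is to approximate $f$ by the square of a polynomial, via a formal-square-root construction. Since $f$ has degree $2d$ and leading coefficient $a^2$, I would solve recursively for rational numbers $c_{d-1},\ldots,c_0$ so that $g(x):=ax^d+c_{d-1}x^{d-1}+\cdots+c_0\in\Q[x]$ satisfies $\deg(f-g^2)<d$. The coefficient of $x^{2d-k}$ in $g^2$ equals $2ac_{d-k}$ plus an expression in $c_{d-1},\ldots,c_{d-k+1}$, so each $c_{d-k}$ is uniquely determined in $\Q$. Setting $h:=f-g^2\in\Q[x]$, if $h=0$ then $g^2=f\in\Z[x]$; writing $g=(p/q)g_0$ with $g_0\in\Z[x]$ primitive and $\gcd(p,q)=1$, Gauss's lemma on contents forces $q=1$, so $g\in\Z[x]$ and we are in the first alternative of the conclusion.

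Otherwise $h\neq0$. Let $N$ be a common denominator of the coefficients of $g$, and put $G:=Ng\in\Z[x]$ and $H:=N^2f-G^2=N^2h\in\Z[x]$, a nonzero polynomial of degree less than $d$. If $f(n)=k^2$ for some $n,k\in\Z$, then
\[
(Nk-G(n))(Nk+G(n))=N^2f(n)-G(n)^2=H(n).
\]
For $|n|$ large we have $|G(n)|\sim|Na|\,|n|^d$, so the larger of $|Nk\pm G(n)|$ is $\gg|n|^d$, while the product is $|H(n)|\ll|n|^{d-1}$; hence the smaller factor is $\ll|n|^{-1}$, which is less than $1$ once $|n|$ is sufficiently large. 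Since it is an integer, it must vanish, forcing $H(n)=0$. Thus $H$ would have infinitely many integer roots, contradicting $H\neq0$; so only finitely many $n\in\Z$ can give square values of $f$.

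The main obstacle I anticipate is purely bookkeeping: verifying that the recursion for $g$ really produces rational coefficients (this relies crucially on $a\neq0$), tracking which of $Nk\pm G(n)$ is the small factor (depending on the sign of $G(n)$, which in turn depends on the parity of $d$ and the sign of $a$), and invoking Gauss's lemma to upgrade $g\in\Q[x]$ to $g\in\Z[x]$ in the case $f=g^2$. None of these steps looks delicate, and the whole argument is essentially the classical observation that $f(n)$ is sandwiched between consecutive squares of an integer-valued expression for large $n$.
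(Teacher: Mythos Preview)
Your proof is correct and takes essentially the same approach as the paper's: both construct a formal square root of $f$ and show that for large $|n|$ a square value forces the remainder to vanish, then invoke Gauss's lemma when the remainder is identically zero. The only cosmetic difference is that the paper rescales by $(2a)^{2d-2}$ so as to work in $\Z[x]$ throughout and phrases the endgame as ``$(2a)^{2d-2}f(n)$ lies strictly between $(|s(n)|-1)^2$ and $(|s(n)|+1)^2$'', whereas you clear denominators after building $g\in\Q[x]$ and phrase it via the factorisation $(Nk-G(n))(Nk+G(n))=H(n)$.
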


\begin{proof}
Let $d=\deg{f}$. The conclusion is clear if $f$ is constant, so we may
assume that $d>0$.  We claim that there are polynomials $s,r\in\Z[x]$
with $\deg{r}<\frac{d}{2}$ such that
\[
(2a)^{2d-2}f(x)=s(x)^2+r(x).
\]
To see this, write $f(x)=\sum_{i=0}^d f_i x^{d-i}$ and
$s(x)=\sum_{i=0}^{\frac{d}{2}}s_ix^{\frac{d}{2}-i}$, where we define
$s_0=2^{d-1}a^d$, and recursively for $i=1,2,\ldots,\frac{d}{2}$,
\[
s_i = (2a)^{d-2}f_i - (2a)^{-d}\sum_{j=1}^{i-1}s_js_{i-j}.
\]
Then it is clear that $r(x)\coloneq(2a)^{2d-2}f(x)-s(x)^2$ has degree less
than $\frac{d}{2}$, and by induction we see that $s_i\in(2a)^{d-2i}\Z$
for every $i>0$, so $s,r\in\Z[x]$.

Since $s$ has degree $\frac{d}{2}$, for large $x$ we have $|r(x)-1|<2|s(x)|$,
and it follows that $(2a)^{2d-2}f(x)$ is strictly between
$(|s(x)|-1)^2$ and $(|s(x)|+1)^2$.

If $r$ is not identically $0$
then it has at most $\frac{d}{2}-1$ zeros, and therefore $f(x)$ has at
most finitely many square values.
Otherwise, if $r$ is identically $0$ then Gauss' Lemma implies that $f=g^2$ for some $g\in\Z[x]$.
\end{proof}

\begin{lemma}\label{lem:nonsquare}
Let $N\in\N$ with $\sqrt{N}\notin\N$. Then there are infinitely many primes $p$ such that the Kronecker symbol $\left(\frac{p}{N}\right)$ equals $-1$.
\end{lemma}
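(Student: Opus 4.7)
The plan is to recognise that $\chi: n \mapsto \left(\frac{n}{N}\right)$ is a non-principal real Dirichlet character and then invoke Dirichlet's theorem on primes in arithmetic progressions. By multiplicativity of the Kronecker symbol in its lower argument, combined with quadratic reciprocity and the supplementary laws for $\left(\frac{\cdot}{2}\right)$, $\chi$ is a Dirichlet character of modulus dividing $\lcm(8,N)$. Once non-triviality is established, Dirichlet's theorem provides infinitely many primes $p$ in any invertible residue class modulo that modulus, in particular one on which $\chi$ takes the value $-1$, and every such prime $p$ then satisfies $\left(\frac{p}{N}\right)=-1$.

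The main (and essentially only) obstacle is verifying that $\chi$ is non-principal, which is exactly where the hypothesis $\sqrt{N}\notin\N$ is used. Writing $N=2^a m^2 k$ with $m,k$ odd and $k$ squarefree, the assumption that $N$ is not a square forces either $a$ odd or $k>1$. For $n$ coprime to $2N$,
\[
\chi(n)=\left(\frac{n}{2}\right)^a\left(\frac{n}{k}\right),
\]
since the $m^2$ factor contributes $\left(\frac{n}{m}\right)^2=1$. If $k>1$, pick a prime $\ell\mid k$ and a quadratic non-residue $r$ modulo $\ell$; by CRT choose $n_0$ coprime to $2N$ with $n_0\equiv r\pmod\ell$, $n_0\equiv1\pmod{k/\ell}$, and $n_0\equiv1\pmod 8$, so that every Jacobi factor in the product for $\chi(n_0)$ equals $+1$ except $\left(\frac{n_0}{\ell}\right)=-1$. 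If instead $k=1$ and $a$ is odd, choose $n_0$ coprime to $m$ with $n_0\equiv3\pmod 8$, giving $\left(\frac{n_0}{2}\right)^a=-1$. Either way $\chi(n_0)=-1$, so $\chi$ is non-principal, and Dirichlet's theorem finishes the proof.
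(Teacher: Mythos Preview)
Your argument is correct. The map $n\mapsto\left(\frac{n}{N}\right)$ is indeed a real Dirichlet character of modulus dividing $\lcm(8,N)$ (incidentally, quadratic reciprocity is not needed here; periodicity in the upper argument follows directly from the factorisation $\left(\frac{n}{N}\right)=\left(\frac{n}{2}\right)^a\prod_{p\mid M}\left(\frac{n}{p}\right)^{v_p(N)}$). Your case analysis showing non-principality is clean, and Dirichlet's theorem then finishes the job.

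The paper takes a deliberately more elementary route. It observes that $\left(\frac{\cdot}{4N}\right)$ is a non-principal character modulo $4N$, uses the orthogonality relation $\sum_{n=1}^{4N}\left(\frac{n}{4N}\right)=0$ together with $\left(\frac{1}{4N}\right)=1$ to exhibit some $n\le4N$ with $\left(\frac{n}{4N}\right)=-1$, and then runs a Euclid-style argument: given finitely many primes $p_1,\ldots,p_k$ with $\left(\frac{p_i}{4N}\right)=-1$ and $p_i\nmid n$, the integer $t=4Np_1\cdots p_k+n$ has $\left(\frac{t}{4N}\right)=-1$, so some prime factor of $t$ is a new one with the desired property. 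This avoids Dirichlet's theorem entirely, which is thematically in keeping with a paper about Euclid--Mullin sequences. Your proof, by contrast, is shorter once Dirichlet is granted, and you spell out the non-principality step that the paper simply asserts.
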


\begin{proof}
Since $N$ is not a square,
$\left(\frac{\cdot}{4N}\right)$ is a non-principal quadratic character modulo $4N$,
and hence we have $\sum_{n=1}^{4N}\left(\frac{n}{4N}\right) = 0$.
Since $\left(\frac{1}{4N}\right)=1$, it follows that there is some $n\le4N$ with
$\left(\frac{n}{4N}\right)=-1$.

We conclude with a version of Euclid's argument. Let $p_1,\ldots,p_k$ be a (possibly empty)
sequence of distinct primes satisfying $\left(\frac{p_i}{4N}\right)=-1$ and $p_i\nmid n$,
and consider the number $t=4Np_1\cdots p_k+n$. Since $t\equiv n\pmod{4N}$, we have
$\left(\frac{t}{4N}\right)=-1$, and by multiplicativity it follows that there is a prime
$p\mid t$ with $\left(\frac{p}{4N}\right)=-1$. By hypothesis $(t,n)=(4Np_1\cdots p_k,n)=1$,
so $p\nmid n$, and it follows that $p\notin\{p_1,\ldots,p_k\}$.
\end{proof}

\begin{lemma}\label{lem:chebotarev}
Let $K$ be a real quadratic field with ring of integers $\OK$ and
fundamental unit $\epsilon\in\OK^\times$. Let $\alpha_1,\ldots,\alpha_n\in
K^\times\setminus\OK^\times$, let $\ell>2$ be a prime such that
the fractional ideals $\alpha_i\OK$ are not $\ell$th powers,
and let $m$ be an odd positive integer.
Then there are infinitely many primes $\pp\subset\OK$ such that
the order of $\epsilon\bmod\pp$ is odd and divisible
by $m$, and $\alpha_1,\ldots,\alpha_n$ are $\ell$th power
non-residues modulo $\pp$.
\end{lemma}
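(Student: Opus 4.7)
The plan is to apply the Chebotarev density theorem to an auxiliary Galois extension $L/K$ whose Frobenius elements encode all the required conditions. Let $p_1,\ldots,p_s$ be the distinct odd prime divisors of $m$, set $N = 4\ell\prod_{i=1}^s p_i^{v_{p_i}(m)+1}$, and define
\[
L = K\bigl(\zeta_N,\,\sqrt{\epsilon},\,\epsilon^{1/p_1},\ldots,\epsilon^{1/p_s},\,\alpha_1^{1/\ell},\ldots,\alpha_n^{1/\ell}\bigr).
\]
I will seek an unramified prime $\pp$ of $K$ of degree $1$ over $\Q$ whose Frobenius $\sigma_\pp \in \Gal(L/K)$ restricts as follows: (i)~non-trivial on $K(\zeta_4)/K$ and trivial on $K(\sqrt{\epsilon})/K$, so that $N(\pp) \equiv 3 \pmod 4$ and $\epsilon$ is a quadratic residue mod $\pp$; (ii)~for each odd $p \mid m$ with $a = v_p(m)$, trivial on $K(\zeta_{p^a})/K$ but non-trivial on $K(\zeta_{p^{a+1}})/K(\zeta_{p^a})$ (so $v_p(N(\pp)-1) = a$), and non-trivial on $K(\zeta_p,\epsilon^{1/p})/K(\zeta_p)$ (so $\epsilon$ is a $p$-th power non-residue mod $\pp$); (iii)~trivial on $K(\zeta_\ell)/K$ and non-trivial on each $K(\zeta_\ell,\alpha_i^{1/\ell})/K(\zeta_\ell)$. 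Condition (i) forces the $2$-part of $(\OK/\pp)^\times$ to have order $2$ while $\epsilon$ has trivial $2$-part there, so $\mathrm{ord}(\epsilon \bmod \pp)$ is odd; condition (ii) yields $v_p(\mathrm{ord}(\epsilon \bmod \pp)) = a$ for each $p \mid m$, hence $m \mid \mathrm{ord}(\epsilon \bmod \pp)$; and condition (iii) makes each $\alpha_i$ a non-$\ell$-th-power modulo $\pp$.

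The existence of such a $\sigma_\pp$ rests on verifying that each Kummer sub-extension of $L$ has maximal degree. First, $\epsilon$ is not a $p$-th power in $K(\zeta_{p^{a+1}})$ for any odd prime $p$ and $a \ge 0$: if $\epsilon = u^p$ in this field, then a cocycle argument under complex conjugation, using the vanishing of $H^1(\Gal(K(\zeta_{p^{a+1}})/K(\zeta_{p^{a+1}})^+),\mu_p)$ for odd $p$, allows $u$ to be chosen in the maximal totally real subfield $K(\zeta_{p^{a+1}})^+/K$, which is cyclic of degree $p^a(p-1)/2$ over $K$; then the degree-$p$ subextension $K(u)/K$ would be cyclic Galois, forcing $\zeta_p \in K(u) \subseteq \R$ -- a contradiction unless $u \in K$, in which case $\epsilon$ is a $p$-th power of a unit in $K$, contradicting its being fundamental. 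The same style of argument shows $\epsilon \notin (K(i)^\times)^2$. Second, the hypothesis that $\alpha_i\OK$ is not an $\ell$-th power of an ideal forces $\alpha_i \notin (K(\zeta_\ell)^\times)^\ell$: if $\alpha_i = \beta^\ell$ with $\beta \in K(\zeta_\ell)^\times$, then $N_{K(\zeta_\ell)/K}(\beta)^\ell = \alpha_i^{[K(\zeta_\ell):K]}$, and since $[K(\zeta_\ell):K]$ is coprime to $\ell$, a Bezout argument gives $\alpha_i \in (K^\times)^\ell$, making $\alpha_i\OK$ an $\ell$-th power of a fractional ideal. Together with the coprimality of the Kummer degrees at distinct rational primes, these assertions identify $\Gal(L/K)$ as a fibre product in which each Kummer layer realises its full group, so $\sigma_\pp$ may be assembled piecewise.

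The main obstacle is realising condition~(iii) at all the $\alpha_i$ simultaneously. Let $W \subseteq K(\zeta_\ell)^\times/(K(\zeta_\ell)^\times)^\ell$ be the $\F_\ell$-subspace generated by the classes of $\alpha_1,\ldots,\alpha_n$. By Kummer duality, the restrictions of $\sigma_\pp$ to the layer at $\ell$ correspond to elements of $\mathrm{Hom}(W,\mu_\ell)$, and the condition becomes that the corresponding homomorphism be non-zero on every $\alpha_i$. Since each $\alpha_i$ is a non-trivial element of $W$ by the previous paragraph, vanishing on $\alpha_i$ cuts out a proper hyperplane in $\mathrm{Hom}(W,\mu_\ell)$; a hyperplane-avoidance argument in the spirit of \cite[Lemma~4]{Booker} produces a homomorphism outside the union of these $n$ hyperplanes, yielding the required Galois element. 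With $\sigma_\pp$ thus constructed, the Chebotarev density theorem provides infinitely many primes $\pp$ of $K$ whose Frobenius lies in its conjugacy class, completing the proof.
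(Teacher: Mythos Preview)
Your strategy is sound in outline, and your handling of condition~(ii) is actually sharper than the paper's (pinning $v_p(N(\pp)-1)$ to exactly $v_p(m)$ makes divisibility of the order by $m$ immediate). However, the ``fibre product'' step hides a genuine incompatibility between conditions~(i) and~(iii). Take $K=\Q(\sqrt3)$ and $\ell=3$ (with any valid $\alpha_i$ and $m$). Then
\[
K(\zeta_4)=K(i)=\Q(\sqrt3,\sqrt{-1})=\Q(\sqrt3,\sqrt{-3})=K(\zeta_3)=K(\zeta_\ell),
\]
so asking for $\sigma_\pp$ non-trivial on $K(\zeta_4)/K$ while trivial on $K(\zeta_\ell)/K$ is impossible. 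More generally, whenever $K=\Q(\sqrt p)$ with $p\equiv3\pmod4$ and $p\in\{\ell,p_1,\ldots,p_s\}$, one has $K(i)\subseteq K(\zeta_p)$, and your requirement $N(\pp)\equiv3\pmod4$ is incompatible with $N(\pp)\equiv1\pmod p$, which is forced (else every residue is an $\ell$th power, resp.\ $\epsilon$ cannot be a $p$th power non-residue). The appeal to ``coprimality of the Kummer degrees at distinct rational primes'' does not address this, because the obstruction lives already in the cyclotomic layer $K(\zeta_N)/K$, whose Galois group is only an index-$2$ subgroup of $(\Z/N\Z)^\times$ when $K\subset\Q(\zeta_N)$.

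The paper avoids this by never imposing $N(\pp)\not\equiv1$ modulo an odd prime: it instead asks $\pp$ to split completely in $K(\zeta_{8m},\sqrt[8]{\epsilon})$ but not in its extension by $\zeta_{16}$, giving $N(\pp)\equiv9\pmod{16}$ with $\epsilon$ an $8$th power, which forces the order of $\epsilon$ to be odd and is compatible with $N(\pp)\equiv1\pmod\ell$. The cost is that one must verify $\zeta_{16}\notin K(\zeta_{8m},\sqrt[8]{\epsilon})$, and this is where most of the paper's effort goes (with a separate case for $K=\Q(\sqrt2)$). If you try to repair your argument by pushing the $2$-adic condition from level $4$ up to level $8$ or $16$, you will run into exactly this verification. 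Two smaller points: when $\ell\mid m$ you must also include $\epsilon$ in the hyperplane-avoidance at~$\ell$; and ``the same style of argument'' does not literally show $\epsilon\notin(K(i)^\times)^2$ since $H^1(\Gal(K(i)/K),\mu_2)\ne0$, though the conclusion follows directly by writing a putative square root as $a+bi$ with $a,b\in K$.
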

\begin{proof}
We may assume without loss of generality that $\ell\mid m$.
For any $r\in\N$, let $\zeta_r$ denote a primitive $r$th root of unity,
and consider primes $\pp$ that split completely in
$L=K(\zeta_{8m},\sqrt[8]\epsilon)$, but not
in $L(\zeta_{16})$, $L(\sqrt[\ell]{\alpha_i})$,
or $L(\sqrt[p]{\epsilon})$ for any prime $p\mid m$.
For any such $\pp$ we have $N(\pp)\equiv9\pmod{16}$, and
it follows that $-1$ is not in the subgroup
of $(\OK/\pp)^\times$ generated by
$\epsilon$. Moreover, $\alpha_1,\ldots,\alpha_n$ are
$\ell$th power non-residues mod $\pp$, and
$\epsilon$ is not a $p$th power non-residue for every
$p\mid m$, so it must have order divisible by $m$.
By the Chebotarev density theorem,
the set of $\pp$ satisfying all these conditions
has relative density at least
\[
\frac1{[L:K]}\left(1-\frac1{[L(\zeta_{16}):L]}\right)
\prod_{i=1}^n\left(1-\frac1{[L(\sqrt[\ell]{\alpha_i}):L]}\right)
\prod_{p\mid m}\left(1-\frac1{[L(\sqrt[p]{\epsilon}):L]}\right),
\]
and this is positive provided that $\zeta_{16}$,
$\sqrt[\ell]{\alpha_i}$, and $\sqrt[p]{\epsilon}$ for $p\mid m$
are not contained in $L$.

To that end, first note that none of
$\sqrt[4]{\epsilon}$, $\sqrt[\ell]{\alpha_i}$, or
$\sqrt[p]{\epsilon}$ for $p\mid m$
is contained in $K(\zeta_r)$ for any $r$,
since they have both real and non-real conjugates, while
$K(\zeta_r)$ is an abelian extension of $\Q$.
Combining this with a consideration of degrees, we find that
$\sqrt[\ell]{\alpha_i}$ and $\sqrt[p]{\epsilon}$
are not contained in $L$.

Next we aim to show that $\zeta_{16}\notin L$.
Suppose otherwise, and let $d\mid m$ be the smallest number
such that $\zeta_{16}\in K(\zeta_{8d},\sqrt[8]{\epsilon})$.
Then for any prime $p\mid d$, $\zeta_{16}$ is not
contained in $M\coloneq K(\zeta_{8d/p},\sqrt[8]{\epsilon})$
but is contained in $M(\zeta_{p^{v_p(d)}})$. Therefore
$\Disc(M(\zeta_{16})/M)\mid\Disc(M(\zeta_{p^{v_p(d)}})/M)$, and so
$p=2$. Since $m$ is odd this cannot happen, so we must have $d=1$ and
$\zeta_{16}\in K(\zeta_8,\sqrt[8]{\epsilon})$.

Suppose $K\ne\Q(\sqrt2)$, and let $F=K(\zeta_8)$.
Let $k\in\{2,4\}$ be
the smallest power of $2$ such that $\sqrt[k]{\epsilon}\notin F$.
Since $K\ne\Q(\sqrt2)$, we have $\Disc{K}\nmid 16$, so
$K\not\subset\Q(\zeta_{16})$, and hence $\zeta_{16}\notin F$.
Therefore $F(\zeta_{16})=F(\sqrt{\zeta_8})$ is a quadratic extension of $F$
contained in $F(\sqrt[8]{\epsilon})$. Kummer theory implies that
$F(\sqrt[k]{\epsilon})$ is the unique such extension, and it follows that
$\sqrt[k]{\epsilon}\zeta_{16}\in F$.
In particular, $\sqrt\epsilon\in K(\zeta_{16})$.
Since $K\cap\Q(\zeta_{16})=K\cap\Q(\sqrt2)=\Q$,
an exercise in Galois theory shows that
\[
\Gal(K(\zeta_{16})/K)\cong\Gal(\Q(\zeta_{16})/\Q)\cong(\Z/16\Z)^\times
\cong C_4\times C_2,
\]
so $K(\zeta_{16})$ contains three quadratic extensions of $K$:
$K(\sqrt2)$, $K(i)$, and $K(i\sqrt{2})$.
Among these, only $K(\sqrt2)$ has a real embedding, so
we must have $K(\sqrt\epsilon)=K(\sqrt2)\subset F$,
meaning $k=4$.
Supposing that $\sqrt[4]{\epsilon}\zeta_{16}=a+bi\in F=K(\sqrt2,i)$
and taking the norm to $K(\sqrt2)$, we obtain $\sqrt\epsilon = a^2+b^2$.
This contradicts the fact that $\sqrt\epsilon$ is not
totally positive, so $\zeta_{16}\notin L$.

It remains only to consider $K=\Q(\sqrt2)$.
In this case the only quadratic extension of
$K$ contained in $K(\zeta_{16})=K\bigl(\sqrt{2+\sqrt2},i\bigr)$
with a real embedding is $K\bigl(\sqrt{2+\sqrt2}\bigr)$,
which cannot equal $K(\sqrt\epsilon)$ since $N(\epsilon)=-1$.
It follows that $[K(\zeta_{16},\sqrt[8]{\epsilon}):K(\zeta_8)]=16$,
so again $\zeta_{16}\notin L$.

\end{proof}

\begin{lemma}\label{lem:diophantine}\
\begin{itemize}
\item[(i)] There are no solutions to $x^2+1=y^r$ in positive integers with $r>1$.
\item[(ii)] If $x,y,r$ are positive integers satisfying $x^2+1=2y^r$ and $r>2$ then
either $x=y=1$ or $(x,y,r)=(239,13,4)$.
\end{itemize}
\end{lemma}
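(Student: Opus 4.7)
My plan is to treat the two parts separately by appealing to classical results in Diophantine analysis: part (i) is a theorem of V.-A.~Lebesgue from 1850, and part (ii) is a theorem of Ljunggren from 1942.

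For (i), I would factor $x^{2}+1=(x+i)(x-i)=y^{r}$ in the Gaussian integers $\Z[i]$, which is a unique factorisation domain. A short parity check shows $x$ must be even: if $x$ were odd we would have $x^{2}+1\equiv2\pmod4$, incompatible with a proper power $y^{r}$ for $r\ge2$. With $x$ even, $1+i\nmid x+i$, so any common divisor of $x+i$ and $x-i$ must be a unit; hence $\gcd(x+i,x-i)=1$. The even-$r$ case is immediate: writing $r=2s$ gives $(y^{s}-x)(y^{s}+x)=1$, which has no solution with $x>0$. For odd $r$, every unit of $\Z[i]$ is an $r$-th power, so we may write $x+i=(a+bi)^{r}$ with $a,b\in\Z$. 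Comparing imaginary parts yields
\[
1 = b\Bigl(\tbinom{r}{1}a^{r-1}-\tbinom{r}{3}a^{r-3}b^{2}+\tbinom{r}{5}a^{r-5}b^{4}-\cdots\Bigr),
\]
which forces $b=\pm1$, and then a short congruence argument on the real part (or the identity $\sum_{j}\binom{r}{2j+1}(-1)^{j}a^{r-2j-1}=\pm1$) eliminates every $a$ and every $r>1$.

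Part (ii) is substantially deeper. Following the same strategy, when $x$ is odd one finds $\gcd(x+i,x-i)=1+i$ in $\Z[i]$, and the factorisation $(x+i)(x-i)=2y^{r}$ forces $x+i=(1+i)(a+bi)^{r}$ for some $a+bi\in\Z[i]$. Separating real and imaginary parts yields a system whose solutions are controlled by the recurrences attached to the Pell equation $u^{2}-2v^{2}=-1$. The main obstacle is that, in contrast to (i), the resulting polynomial identities in $a,b$ genuinely admit the sporadic solution $(x,y,r)=(239,13,4)$, and ruling out all other exceptional solutions requires Skolem's $p$-adic method together with delicate bounds on linear recurrences; this is the technical heart of Ljunggren's original paper (later simplified by Steiner and Tzanakis). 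Since a self-contained proof would take several pages and add nothing new, I would simply cite Ljunggren's theorem. Thus the only real work is in (i), and the deep content of (ii) is deferred to the literature.
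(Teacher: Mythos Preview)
Your approach matches the paper's: both parts are classical results quoted from the literature, and the paper's entire proof is a one-line citation of Lebesgue for (i) and St\"ormer for (ii), with no sketch of either argument. Your outline of (i) is exactly Lebesgue's Gaussian-integer proof; the ``short congruence argument'' you allude to at the end is in fact the substance of that proof (a $2$-adic analysis using that $a$ must be even), so if you intend to present it rather than cite it, that step needs to be written out.

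One caveat on (ii): the paper attributes the full statement to St\"ormer, whereas you cite Ljunggren. Ljunggren's celebrated 1942 theorem (and the Steiner--Tzanakis simplification you mention) concerns the specific case $x^{2}+1=2y^{4}$. The lemma as stated requires all $r>2$, and in particular odd prime exponents $r$, which are not covered by the $r=4$ result; your Gaussian-integer sketch also silently assumes $r$ odd when absorbing the unit into $(a+bi)^{r}$. So either check that your intended reference actually handles every $r>2$, or cite St\"ormer (or another source) for the odd-exponent case alongside Ljunggren for $r=4$.
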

\begin{proof}
These are results of Lebesgue \cite{Lebesgue} and
St\"ormer \cite[\S3]{Stormer}, respectively.
\end{proof}

\section{Proof of main results}\label{sec:proof}
\begin{proof}[Proof of Theorem~\ref{thm:smallm}]
Consider $m\in\{1,2\}$ and $f\in\GEM(1,m)$.
By Gauss' Lemma, we can write
$f=c\prod_i f_i^{n_i}$, where $c\in\Z$ and $f_i\in\Z[x]$ is irreducible
with content $1$. It is easy to see from Definition~\ref{def:GEM}
that $F(x)=c^2\prod_i(f_i(0)f_i(x))$ is a
$\GEM(1,m)$ polynomial satisfying $\GEM_2(F;1,m)=\GEM_2(f;1,m)$.
Thus, replacing $f$ by $F$ if necessary,
we may assume without loss of generality that $f$ is
squarefree and $f(0)$ is a square.
 
Write $\GEM_2(f;1,m)=(p_n)_{n\ge1}$, and suppose that
it contains all primes except for a finite set $\{q_1,\ldots,q_r\}$.
(Note that the exceptional set must contain all primes dividing
$mf(0)$, but could otherwise be empty.)
Let $x>0$ be a large real number, let $p=p_{n+1}$ be the prime $\le x$
that occurs last in the sequence, and set $N=|f(p_1\cdots p_n)|$.
Then
\[
N=q_1^{e_1}\cdots q_r^{e_r}p^e,
\]
for some exponents $e_i\ge0$ and $e>0$.

Suppose first that $\deg{f}\ge3$. Then
applying Lemma~\ref{lem:siegel} to both $f$ and $-f$,
we see that $N$ is not a square when $x$ is sufficiently large,
so by Lemma~\ref{lem:nonsquare} there is a prime $P$ exceeding
$q_1,\ldots,q_r$ such
that $\left(\frac{P}{N}\right)=-1$. Next we apply
Theorem~\ref{thm:quadchars} to the characters
\[
\left(\frac{\cdot}{q_1}\right),
\ldots,
\left(\frac{\cdot}{q_r}\right),
\left(\frac{\cdot}{p}\right),
\left(\frac{-4}{\cdot}\right).
\]
This produces a squarefree positive integer $d\equiv1\pmod4$ such that
\[
\left(\frac{d}{q_i}\right)=\left(\frac{P}{q_i}\right),
\quad
\left(\frac{d}{p}\right)=\left(\frac{P}{p}\right)
\]
and every prime factor of $d$ is
$O_\varepsilon\bigl((q_1\cdots q_rp)^{\frac1{4\sqrt{e}}+\varepsilon}\bigr)$.
Since $\frac1{4\sqrt{e}}<1$, this is less than $p$ for sufficiently
large $x$, whence $d\mid p_1\cdots p_n$.

By multiplicativity of the Kronecker symbol, it follows that
$\left(\frac{d}{N}\right)=\left(\frac{P}{N}\right)=-1$. However,
by our construction of $d$, we have $N\equiv\pm f(0)\pmod{d}$,
so by quadratic reciprocity,
\[
\left(\frac{d}{N}\right)
=\left(\frac{N}{d}\right)=\left(\frac{\pm f(0)}{d}\right)=1.
\]
This is a contradiction, completing the proof when $\deg{f}\ge3$.

Next suppose $\deg{f}=2$, and write $f=ax^2+bx+c$
with $c$ a square, and $D\coloneq b^2-4ac\ne0$.
If $a$ is also a square then it follows from Lemma~\ref{lem:squarevalues}
that $|f|$ has at most finitely many square values, so the above proof
goes through. Hence we may assume that $\sqrt{a}\notin\Q$.
If $a<0$ then, choosing $p_i\equiv3\pmod{4}$,
for large enough $x$ it follows that
\[
\left(\frac{N}{p_i}\right)
=\left(\frac{-f(p_1\cdots p_n)}{p_i}\right)
=\left(\frac{-c}{p_i}\right)
=-1.
\]
Thus $N$ is not a square, and again the proof goes through.
Hence we may assume that $a>0$.

Suppose $f(u)=v^2$ for some $u,v\in\Z$. Completing the square, we have
\begin{equation}\label{eq:Pell}
(2au+b)^2 - 4av^2 = D.
\end{equation}
Let $K=\Q(\sqrt{a})$, with ring of integers $\OK$ and
fundamental unit $\epsilon\in\OK^\times$, and
let $\sigma:K\to K$ denote the non-trivial automorphism.
By the theory of Pell's equation,
there are elements $\rho_1,\ldots,\rho_s\in\OK$
such that
\begin{itemize}
\item $N(\rho_i)=D$ for each $i$;
\item $\rho_i/\rho_j\notin\OK^\times$ for $i\ne j$;
\item for any solution to \eqref{eq:Pell}, there exist
$i\in\{1,\ldots,s\}$ and $k\in\Z$ such that $N(\epsilon)^k=1$ and
\[
2au+b+v\sqrt{4a} = \pm\rho_i\epsilon^k.
\]
\end{itemize}

Returning to the setup above,
we wish to rule out solutions to \eqref{eq:Pell} with $u=p_1\cdots p_n$.
If the desired conclusion is false then we can generate infinitely
many such solutions by increasing $x$. In each solution we may
assume that $k\ge0$ by applying $\sigma$ if necessary, and
by the pigeonhole principle
we can choose infinitely many solutions with a fixed value of $\rho_i=\rho$
and a fixed $\pm$ sign. Replacing $\rho$ by $-\rho$ if necessary, we thus obtain
\[
4ap_1\cdots p_n=\rho\epsilon^k+\rho^\sigma\epsilon^{-k}-2b
\]
for infinitely many $n$ (with $k$ depending on $n$, but $a,b,\rho,\epsilon$
fixed).
Set $\rho_0=b+\sqrt{4ac}\in\OK$, and note that $N(\rho_0)=D$
and $\rho_0+\rho_0^\sigma=2b$, so the above can be written as
\[
4ap_1\cdots p_n=
\rho\epsilon^k-\rho_0 + \rho^\sigma\epsilon^{-k}-\rho_0^\sigma
=\frac{(\rho\epsilon^k - \rho_0)(\rho\epsilon^k - \rho_0^\sigma)}{\rho\epsilon^k}.
\]
Thus if $p_i$ is a prime not dividing $D$ then
for each prime $\pp\mid p_i\OK$ we have
\[
\epsilon^k \equiv \rho^{-1}\rho_0\text{ or }\rho^{-1}\rho_0^\sigma\pmod{\pp}.
\]

Suppose first that $\rho\notin\rho_0\OK^\times\cup\rho_0^\sigma\OK^\times$.
Then $\alpha=\rho^{-1}\rho_0$ and $\beta=\rho^{-1}\rho_0^\sigma$
are non-integral elements of $K$ with norm $1$, each with negative valuation at
a prime dividing $\rho\OK$. Choose a prime $\ell>2$
that does not divide either valuation, and apply the pigeonhole
principle again to select $k$ in a fixed residue class
$k_0\bmod\ell$.
Then neither $\alpha\epsilon^{-k_0}$ nor $\beta\epsilon^{-k_0}$
is an $\ell$th power in $K$, but for every $\pp\mid p_i\OK$
for some $p_i\nmid D$,
one of them is an $\ell$th power modulo $\pp$.
By Lemma~\ref{lem:chebotarev}, there are infinitely many
$\pp$ for which this does not happen, and that
results in a contradiction for large enough $n$.

Hence we have
$\rho\in\rho_0\OK^\times\cup\rho_0^\sigma\OK^\times$,
and replacing $\rho_0$ by $\rho_0^\sigma$ if necessary
we may assume that $\rho\in\rho_0\OK^\times$. Further,
replacing $k$ by a suitable translate, we may assume
that $\rho=\pm\rho_0$, so we obtain
\[
4ap_1\cdots p_n=\epsilon^{-k}(\epsilon^k \mp 1)(\pm\rho_0\epsilon^k - \rho_0^\sigma).
\]

Consider first the case when $\rho_0^{-1}\rho_0^\sigma\notin\OK^\times$.
Then applying Lemma~\ref{lem:chebotarev} in a similar manner,
we can find infinitely many primes $\pp$ such that
$-\epsilon^{-k_0}\rho_0^{-1}\rho_0^\sigma$ is not
an $\ell$th power modulo $\pp$ and $\epsilon$ has odd order modulo $\pp$.
The latter condition implies that $-1$ is not congruent modulo $\pp$
to a power of $\epsilon$, so this again results in a contradiction
if $\rho=-\rho_0$.
Therefore we have $\rho=\rho_0$ and
\[
4ap_1\cdots p_n=\epsilon^{-k}(\epsilon^k-1)(\rho_0\epsilon^k - \rho_0^\sigma).
\]
Let $\qq$ be a prime with $\qq\nmid 2aDq_1\cdots q_r\OK$
such that $\epsilon\bmod\qq$ has odd order, which we can ensure
by Lemma~\ref{lem:chebotarev}.
Let $m$ be the order of $\epsilon\bmod\qq^2$, which
must also be odd.
By Lemma~\ref{lem:chebotarev},
there are infinitely many primes $\pp$ such that
$\epsilon^{-k_0}\rho_0^{-1}\rho_0^\sigma$ is not
an $\ell$th power modulo $\pp$ and the order of $\epsilon\bmod\pp$
is divisible by $m$.
It follows that $m\mid k$, which in turn implies that
$\qq^2\mid(\epsilon^k-1)\OK$. Since $p_1\cdots p_n$ is squarefree,
this is a contradiction.

Suppose now that $\rho_0^{-1}\rho_0^\sigma\in\OK^\times$.
Then it follows that $|D|$ is a square and $\rho_0\in\sqrt{|D|}\OK^\times$,
so we have $|\rho_0|=\sqrt{|D|}\epsilon^j$ for some $j\in\Z$.
Thus, up to a constant factor, we
need to analyse the reduction mod $\pp$ of
$(\epsilon^k\pm1)(\epsilon^{k+2j}\pm\sgn{D})$.
Applying Lemma~\ref{lem:chebotarev} again, we see that
the $(\epsilon^k+1)(\epsilon^{k+2j}+1)$
case cannot occur, and in the remaining cases
we conclude that $m$ must divide one of $k$ or
$k+2j$. Since $m$ is odd, this still suffices to
imply that $\qq^2\mid p_1\cdots p_n\OK$, completing the
proof when $\deg{f}=2$.

Finally suppose $\deg{f}=1$, and write $f=ax+b$ with $b$ a square.
The case when $a<0$ can be handled by reducing modulo a prime
$p_i\equiv3\pmod4$ as in the proof for degree $2$ above,
so we may assume that $a>0$.
Let $g=(a,b)$, and set $a'=a/g$, $b'=b/g$.
By our construction, $p_1\cdots p_n$ is coprime
to $b$, and it follows that
$N'\coloneq\frac{N}{g}=a'p_1\cdots p_n+b'$
is coprime to $b'$. Suppose $b'N'$ is not an $8$th power,
and note that this must be the case if $2\mid a'$ and $32\nmid a'$.
Let $m\in\{2,4,8\}$ be the least power of $2$ such that
$b'N'$ is not an $m$th power, and set
$y=(b'N')^{\frac2m}\in\N$. Then
$y$ is not a square, so by Lemma~\ref{lem:nonsquare} there is a prime
$P$ exceeding $q_1,\ldots,q_r$ for which $\left(\frac{P}{y}\right)=-1$.
Applying Theorem~\ref{thm:quadchars} as before,
we obtain a squarefree number $d$ such that
\[
\left(\frac{d}{q_i}\right)=\left(\frac{P}{q_i}\right),
\quad
\left(\frac{d}{p}\right)=\left(\frac{P}{p}\right),
\quad
\left(\frac{-4}{d}\right)=1\text{ when }m=2,
\]
and every prime factor of $d$ is
$O_{m,\varepsilon}\bigl((q_1\cdots q_rp)^{\frac{\varphi(m)}{4\sqrt{e}}+\varepsilon}\bigr)$ and congruent to $1\pmod{m}$.
Since $\frac{\varphi(m)}{4\sqrt{e}}<1$, it follows that
$d\mid p_1\cdots p_n$ when $x$ is sufficiently large.
For $p_i\equiv1\pmod{m}$, Euler's criterion implies
\[
\left(\frac{y}{p_i}\right)\equiv
y^{\frac{p_i-1}{2}}=(b'N')^{\frac{p_i-1}{m}}
\equiv\bigl((b')^2\bigr)^{\frac{p_i-1}{m}}
=\bigl((b')^{\frac2m}\bigr)^{p_i-1}\equiv1\pmod{p_i}.
\]
Since all prime factors of $b'N'$ are contained in $\{q_1,\ldots,q_r,p\}$,
we thus have
\[
-1=\left(\frac{P}{y}\right)
=\left(\frac{d}{y}\right)
=\left(\frac{y}{d}\right)
=1,
\]
which is again a contradiction.

Hence we may assume that $b'$ is an $8$th power and $a'$ is odd or divisible
by $32$. In this case we give a different proof based on ERH.
Let $\ell$ be the smallest odd prime that occurs in
$\GEM_2(f;1,m)$. Replacing $f$ by $b^{\ell-1}f$, we may assume that $b$
is an $\ell$th power.

Assume that $x$ is large enough to ensure that
$\ell\mid p_1\cdots p_n$.
Then we have $N\equiv b\pmod{\ell}$
but $N\not\equiv b\pmod{\ell^2}$
since $\ell^2\nmid ap_1\cdots p_n$.
Therefore, if $\xi\pmod{\ell^2}$ is a character
of order $\ell$ then $\xi(N)\ne1$.
Assuming ERH, we can apply Theorem~\ref{thm:orderl} to choose a character
$\chi$ of squarefree conductor $d$ and order dividing $\ell$ such that
\[
\chi(p)=\xi(p)
\quad\text{and}\quad
\chi(q_i)=\xi(q_i)
\text{ for }i=1,\ldots,r,
\]
and every prime factor of $d$ is $O(\ell^4\log^2(\ell q_1\cdots q_rp))$.
For sufficiently large $x$ this is less than $p$, and therefore
$d\mid p_1\cdots p_n$, so that $\chi(N)=\chi(b)=1$.
However, by construction we have
\[
\chi(N)=\chi(q_1)^{e_1}\cdots\chi(q_r)^{e_r}\chi(p)^e
=\xi(q_1)^{e_1}\cdots\xi(q_r)^{e_r}\xi(p)^e=\xi(N)\ne1.
\]
This is a contradiction, and that completes the proof
of Theorem~\ref{thm:smallm}.
\end{proof}

\begin{proof}[Proof of Theorems~\ref{thm:ERH} and \ref{thm:unconditional}]
Let $m\in\N$ and $c\in S(m)$. We may assume that $m\ne2$, and that
$m=1\implies c\in\{-5\}\cup\{1-2^k:k\in\N\}$, since
Theorem~\ref{thm:ERH} and all other cases of Theorem~\ref{thm:unconditional}
for $m\le2$ follow from Theorem~\ref{thm:smallm}.
As in the proof above we suppose that
$\GEM_2(\Phi_m(cx);1,m)=(p_n)_{n\ge1}$ contains all primes
$\equiv1\pmod{m}$ except for a finite (possibly empty) set
$\{q_1,\ldots,q_r\}$. Let $\mu$ be the largest prime factor of
$m$ if $m>1$, and set $\mu=1$ if $m=1$.
Let $x>0$ be a large real number, let $p=p_{n+1}$
be the prime $\le x$ that occurs last in the sequence, and set
$M=|\Phi_m(cp_1\cdots p_n)|$.
Then we have
\[
M = \mu^\delta q_1^{e_1}\cdots q_r^{e_r}p^e
\]
for some exponents $\delta\in\{0,1\}$, $e_i\ge0$, and $e>0$,
with $\delta=0$ when $m=1$.

Set $N=M/\mu^\delta$.
Then we claim that $N$ is not a square for $x$
sufficiently large:
\begin{itemize}
\item For $m>2$ and $\delta=0$,
the claim follows from Lemma~\ref{lem:squarevalues}
applied to $\Phi_m$.
\item For $m\notin\{3,4,6\}$ and $\delta=1$,
the claim follows from Lemma~\ref{lem:siegel} applied to $\mu\Phi_m$.
\item For $m\in\{3,4,6\}$ and $\delta=1$, for large enough $x$ we can choose
$p_i$ for $i\le n$ such that $\left(\frac{\mu}{p_i}\right)=-1$
(viz.\ $p_i\equiv5\pmod8$ for $m=4$, $p_i\equiv7\pmod{12}$ for $m\in\{3,6\}$).
Since $M\equiv1\pmod{p_i}$, it follows that $\left(\frac{N}{p_i}\right)=-1$.
\item For $m=1$ and
$c\in\{-5\}\cup\bigl\{1-2^k:k\in\N\bigr\}$, $2$ occurs in the sequence
as either the first or second term, after
which point we have $N\equiv3\pmod4$.
\end{itemize}

Applying Lemma~\ref{lem:nonsquare}, there exists a prime $P$ exceeding
$q_1,\ldots,q_r$ such that $\left(\frac{P}{N}\right)=-1$.
Next we apply Theorem~\ref{thm:quadchars} to the characters
\[
\left(\frac{\cdot}{q_1}\right),
\ldots,
\left(\frac{\cdot}{q_r}\right),
\left(\frac{\cdot}{p}\right),
\]
and also $\left(\frac{-4}{\cdot}\right)$ when $4\nmid m$
and $\left(\frac{8}{\cdot}\right)$ when $m=4$.
(Note that we allow $q_i=2$ when $m=1$, in which case
$\left(\frac{\cdot}{q_i}\right)=\left(\frac{8}{\cdot}\right)$
is a character of conductor $8$.)
This produces a squarefree $d\equiv1\pmod{4}$ such that
\[
\left(\frac{d}{q_i}\right)=\left(\frac{P}{q_i}\right),
\quad
\left(\frac{d}{p}\right)=\left(\frac{P}{p}\right),
\quad
\left(\frac{d}{2}\right)=1\text{ when }m=4,
\]
and every prime factor of $d$ is
$O_{m,\varepsilon}\bigl((q_1\cdots q_rp)^{\frac{\varphi(m)}{4\sqrt{e}}+\varepsilon}\bigr)$
and congruent to $1\pmod{m}$.
When $m\in\{1,\ldots,10,12,14,18\}$ we have
$\frac{\varphi(m)}{4\sqrt{e}}<1$, so for sufficiently large
$x$ every prime factor of $d$ is less than $p$, whence $d\mid p_1\cdots p_n$.
For general $m$ the same conclusion holds under the assumption of GLH.

By construction we have $M\equiv\pm1\pmod{d}$ and $d\equiv1\pmod{m}$,
and thus
\[
-1=\left(\frac{P}{N}\right)=
\left(\frac{d}{N}\right) = \left(\frac{N}{d}\right)
=\left(\frac{\pm \mu^\delta}{d}\right)
=\left(\frac{d}{\mu^\delta}\right)=1.
\]
This is a contradiction, and that completes the proof.
\end{proof}

\begin{proof}[Proof of Theorem~\ref{thm:firstprime}]
Let $m=2^k$ for some $k>1$, and write $\GEM_2(\Phi_m(cx);1,m)=(p_n)_{n\ge1}$.
Let $p$ be the smallest prime congruent
to $1\pmod{m}$, and suppose that $p=p_{n+1}$ for some $n\ge0$.

Suppose $c$ is even. Then we have
\[ (cp_1\cdots p_n)^{2^{k-1}} + 1 = p^r. \]
By Lemma~\ref{lem:diophantine}(i), this can only happen if $r=1$, which forces
$n=0$.

On the other hand, if $c$ is odd then we have
\[ (cp_1\cdots p_n)^{2^{k-1}} + 1 = 2p^r. \]
Noting that $239$ is a prime congruent to $3\pmod4$,
Lemma~\ref{lem:diophantine}(ii) implies that if $n>0$ then we must have $r\le2$,
which is a contradiction. Thus we again have $n=0$, and either
$(k,c,p,r)=(2,\pm239,13,4)$ or $r\le2$.
However, if $k=2$ then $p=5$, so the former case does not apply.

Hence, in all cases we have
\begin{equation}\label{eq:firstprime}
c^{2^{k-1}} + 1 \in \bigl\{p,2p,2p^2\bigr\},
\end{equation}
and it follows that $2p^2>3^{2^{k-1}}$.
Set $x = 3^{2^{k-2}}/\sqrt{2}$. Then by \cite[Theorem~1.2]{BMOR},
\[
(80\log{3}-1)2^{k-1} > 80\log{2}
\implies \theta(x;2^k,1) > \frac{x}{2^{k-1}}-\frac{x}{160\log{x}}>0,
\]
provided that $2^k>80$ and $x>\exp(0.03\cdot 2^{k/2}\log^3(2^k))$,
which holds for $k\ge12$. Noting that $17\equiv1\pmod{2^4}$, $97\equiv1\pmod{2^5}$, and $12289\equiv1\pmod{2^{12}}$ are all prime, we see that $2p^2<3^{2^{k-1}}$
whenever $k\ge4$.

Hence we must have $(k,p)=(2,5)$ or $(3,17)$, and then it is straightforward
to see that the only solutions to \eqref{eq:firstprime} are
$k=2$, $c\in\{\pm2,\pm3\}$ and $k=3$, $c\in\{\pm2\}$.
In each of those cases, $p$ occurs as $p_1$, but
then the next prime $\equiv1\pmod{2^k}$ is omitted, as we can see
by applying Lemma~\ref{lem:diophantine} to the equations
\[
(\pm10p_2\cdots p_n)^2+1=13^r,
\quad
(\pm15p_2\cdots p_n)^2+1=2\cdot13^r,
\quad
(\pm34p_2\cdots p_n)^4+1=41^r.
\]
\end{proof}

\bibliographystyle{amsplain}
\bibliography{paper}
\end{document}